\theoremstyle{definition}
\newtheorem*{ack}{Acknowledgements}
\newtheorem*{notat}{Notation}
\newtheorem*{thm*}{Theorem}
\newtheorem*{rem}{Remark}
\theoremstyle{plain}
\newtheorem{thm}{Theorem}[section]
\newtheorem{lem}[thm]{Lemma}
\newtheorem{prop}[thm]{Proposition}
\numberwithin{equation}{section}
\newcommand*{\@old@slash}{}\let\@old@slash\slash
\def\slash{\relax\ifmmode\delimiter"502F30E\mathopen{}\else\@old@slash\fi}
\DeclareMathOperator{\Ann}{Ann}
\DeclareMathOperator{\Pic}{Pic}
\DeclareMathOperator{\GL}{GL}
\DeclareMathOperator{\SL}{SL}
\newcommand{\proj}{\mathbb{P}}
\newcommand{\Q}{\mathbb{Q}}
\newcommand{\Z}{\mathbb{Z}}
\newcommand{\C}{\mathbb{C}}
\newcommand{\F}{\mathbb{F}}
\newcommand{\uC}{\mathcal{H}}
\DeclareMathOperator{\End}{End}
\newcommand{\T}{\mathbb{T}}
\newcommand{\loc}{{\mathcal O}}
\DeclareMathOperator{\Spec}{Spec}
\DeclareMathOperator{\Jac}{Jac}
\DeclareMathOperator{\Tors}{Tors}
\title[Serre's Uniformity Conjecture]{Serre's Uniformity Conjecture for Elliptic Curves with Rational Cyclic Isogenies}
\author{Pedro Lemos}
\address{University of Warwick, Mathematics Institute, Coventry, UK}
\email{lemos.pj@gmail.com}
\begin{document}
\maketitle

\begin{abstract}
Let $E$ be an elliptic curve over $\Q$ such that $\End_{\bar{\Q}}(E)=\Z$ and which admits a non-trivial cyclic $\Q$-isogeny. We prove that, for $p>37$, the residual mod $p$ Galois representation $\bar{\rho}_{E,p}:G_{\Q}\rightarrow\GL_2(\F_p)$ is surjective.
\end{abstract}
\section{Introduction}

In \cite{ser1}, Serre famously proved that, given an elliptic curve $E$ defined over a number field $K$ without complex multiplication, there exists a prime number $p_{E,K}$ such that, for any prime $p> p_{E,K}$, the image of the residual mod $p$ Galois representation $\bar{\rho}_{E,p}:G_{\Q}\rightarrow\GL_2(\F_p)$ attached to $E$ is the whole of $\GL_2(\F_p)$. In the very same paper, Serre raised the following question: 
\newline

Given a number field $K$, is there a prime number $p_K$ such that, for any elliptic curve $E$ over $K$ without complex multiplication, the residual mod $p$ Galois representation $\bar{\rho}_{E,p}$ is surjective onto $\GL_2(\F_p)$, whenever $p$ is a prime larger than $p_K$? 
\newline

This conjecture remains open today, but, over the last forty years, there has been a lot of progress towards a proof for $K=\Q$ --- it is believed that, in this case, $p_K=37$. The classification of maximal subgroups of $\GL_2(\F_p)$ plays a central role in the general strategy used to tackle this problem: the aim is to try to show that, for $p$ large enough, there are no elliptic curves without complex multiplication for which the image of $\bar{\rho}_{E,p}$ is contained in any of these maximal subgroups. The maximal subgroups not containing $\SL_2(\F_p)$ are the Borel subgroups, the normalisers of (split and non-split) Cartan subgroups and a few exceptional ones. The exceptional cases were treated by Serre in \cite{ser2}. Mazur, in \cite{maz1}, treated the Borel case, exhibiting all the possible prime degrees of rational isogenies admitted by elliptic curves over $\Q$: for elliptic curves over $\Q$ without complex multiplication, the possible prime degrees of rational isogenies are $2,3,5,7,11,13,17$ and $37$. Finally, Bilu and Parent \cite{bilu} and Bilu, Parent and Rebolledo \cite{rebbil} studied the case of the normaliser of a split Cartan. In \cite{rebbil}, they proved that if $E$ is an elliptic curve over $\Q$ without complex multiplication, and $p\geq 11$ is a prime different from $13$, then the image of $\bar{\rho}_{E,p}$ is not contained in the normaliser of a split Cartan subgroup of $\GL_2(\F_p)$. The verification of the conjecture is thus reduced to the proof that, for $p$ large enough, the image of the mod $p$ Galois representation of any non-CM elliptic curve over $\Q$ is not contained in the normaliser of any non-split Cartan subgroup of $\GL_2(\F_p)$.
\newline

For deep reasons, this last case seems, for the moment, out of reach. However, in the direction of such a result, we prove, in this paper, the following:
\begin{thm}\label{main}
Let $E$ be an elliptic curve over $\Q$ such that $\End_{\bar{\Q}}(E)=\Z$. Suppose, moreover, that $E$ admits a non-trivial cyclic $\Q$-isogeny. Then, for $p>37$, the residual mod $p$ Galois representation $\bar{\rho}_{E,p}:G_{\Q}\rightarrow\GL_2(\F_p)$ is surjective.
\end{thm}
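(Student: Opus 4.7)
The strategy is to show, under the stated hypotheses, that for $p>37$ none of the proper maximal subgroups of $\GL_2(\F_p)$ can contain the image of $\bar\rho_{E,p}$. Combining the results cited in the introduction---Serre's analysis of the exceptional maximal subgroups, Mazur's isogeny theorem excluding Borel subgroups for $p>37$, and the Bilu--Parent--Rebolledo theorem excluding normalisers of split Cartan subgroups for $p\geq 11$, $p\neq 13$---all that remains is to rule out the possibility that the image is contained in the normaliser $N_{ns}(p)$ of a non-split Cartan subgroup $C_{ns}(p)\subset\GL_2(\F_p)$. The whole force of the extra hypothesis on the cyclic isogeny will be brought to bear on this single case.

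By factoring the given cyclic $\Q$-isogeny, we may assume that $E$ admits a $\Q$-rational isogeny of prime degree $\ell$, and Mazur's theorem then forces $\ell\in\{2,3,5,7,11,13,17,37\}$. Let $C\subset E[\ell]$ be the corresponding $\Q$-rational cyclic subgroup and $\chi:G_{\Q}\to\F_\ell^*$ its associated isogeny character. Suppose for contradiction that $\bar\rho_{E,p}(G_{\Q})\subset N_{ns}(p)$. The image cannot lie in $C_{ns}(p)$ itself, for otherwise $\bar\rho_{E,p}$ would be abelian and $E$ would be forced to have complex multiplication, contradicting $\End_{\bar\Q}(E)=\Z$. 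One therefore obtains a non-trivial quadratic character $\epsilon:G_{\Q}\to\{\pm 1\}$ from the projection $N_{ns}(p)\twoheadrightarrow N_{ns}(p)/C_{ns}(p)$; let $K$ be its fixed field. Over $K$ the restriction $\bar\rho_{E,p}|_{G_K}$ decomposes, over $\bar\F_p$, as $\psi\oplus\psi^p$ for some character $\psi:G_K\to\bar\F_p^*$. A local analysis at $p$ forces $E$ to have good \emph{supersingular} reduction at $p$ (else the inertia would place $\bar\rho_{E,p}$ inside a Borel), which pins down $\psi|_{I_p}$ as a power of the fundamental character of level $2$.

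The heart of the argument is then the interaction between $\chi$ and $\psi$. Geometrically, the pair (isogeny, non-split Cartan structure) produces a non-cuspidal $\Q$-rational point on the fibre product $X:=X_0(\ell)\times_{X(1)}X_{ns}^+(p)$. Whereas $X_{ns}^+(p)$ alone has a single cusp---which has historically obstructed Runge-type arguments in the pure non-split Cartan problem---the second factor $X_0(\ell)$ contributes additional rational cusps, so that $X$ has enough cusps for a Runge-type or Mazur-type attack to become viable. I would attempt to identify a quotient of $\Jac(X)$ of bounded Mordell--Weil rank, analogous to Mazur's Eisenstein/winding quotient, to which a formal immersion criterion can be applied at a suitable auxiliary prime of good reduction; combined with the constraint on $\psi|_{I_p}$ from the supersingular reduction and with the Momose-type description of $\chi$ as a product of the cyclotomic character and characters ramified only at $\ell$, the existence of the putative $\Q$-rational point is excluded.

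The main obstacle is making this analysis uniform in $\ell$, particularly for $\ell\in\{17,37\}$, where $X$ has large genus and the Mordell--Weil analysis of its Jacobian becomes delicate; the prime $\ell=37$ is likely to require the most careful case-by-case work. The uniformity of the threshold $p>37$ (rather than one depending on $\ell$) should reflect the fact that the decisive local input comes from the supersingular inertia action at $p$, which is insensitive to $\ell$.
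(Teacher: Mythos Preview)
Your proposal is a strategy outline rather than a proof, and several of the steps you gesture at either remain to be done or point in a harder direction than necessary.

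First, for $\ell\in\{11,17,37\}$ you flag the analysis of $X_0(\ell)\times_{X(1)}X_{\mathrm{ns}}^+(p)$ as ``the main obstacle'' and expect ``the most careful case-by-case work''. In fact these cases are the easiest: $X_0(11)$, $X_0(17)$ and $X_0(37)$ each have only finitely many rational points, and these are completely known. The paper simply lists the seven $j$-invariants involved and checks in the LMFDB that each non-CM one has surjective mod~$p$ representation for $p>37$. No fibre-product curve, no formal immersion, and no interaction with $p$ is needed here at all.

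Second, for $\ell\in\{2,3,5,7,13\}$ your plan to study $X_0(\ell)\times_{X(1)}X_{\mathrm{ns}}^+(p)$ via a rank-$0$ quotient and a formal immersion is exactly the Darmon--Merel argument already cited in the introduction (Theorem~\ref{dm}), and its output is only $j(E)\in\Z[1/p]$. You do not say how to finish from there. The paper's actual contribution is twofold: a short local computation (Proposition~\ref{good}) showing that if $\bar\rho_{E,p}(G_\Q)\subset N_{\mathrm{ns}}$ then $E$ has potentially good reduction at every prime $\ell\not\equiv\pm 1\pmod p$, in particular at $p$ itself, which upgrades $j(E)\in\Z[1/p]$ to $j(E)\in\Z$; and then an explicit determination of the finitely many integral $j$-invariants on $X_0(r)$ for each $r\in\{2,3,5,7,13\}$, followed by a database check. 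Your supersingular/level-$2$ fundamental character discussion and the Momose-type analysis of the isogeny character $\chi$ are not used in the paper and do not obviously lead to a conclusion; the decisive local input is much more elementary---just the shape of $\bar\rho_{E,p}|_{G_{\Q_\ell}}$ at a prime of potentially multiplicative reduction, compared against the eigenvalue structure of $C_{\mathrm{ns}}$.
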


Mazur \cite{maz1} proved that if $p\geq 11$ is a prime different from $13$, then any elliptic curve over $\Q$ with a $\Q$-rational $p$ isogeny has potentially good reduction at every prime $\ell>2$. In order to prove this, Mazur introduced the concept of \emph{formal immersion}, concept which will be central in our discussion. We say that a morphism $f:X\rightarrow Y$ between two schemes is a \emph{formal immersion at a point $P\in X$} if the induced morphism of completed local rings at $P$ \begin{equation*}\hat{f}^*:\hat{\loc}_{Y,f(P)}\rightarrow\hat{\loc}_{X,P}\end{equation*}is surjective. We will briefly describe his method. %This was used to rule out the existence of elliptic curves $E$ defined over $\Q$, with potentially multiplicative reduction at a prime $\ell>2$ and admitting a rational subgroup $C$ of order $p$. We describe how this was done.
\newline

Say that $E$ is an elliptic curve over $\Q$ and that $C$ is a $\Q$-rational subgroup of $E(\bar{\Q})$ of order $p$, where $p\geq 11$ is a prime different from $13$. Then $(E,C)$ is a representative of an isomorphism class of pairs which, by the moduli interpretation of $X_0(p)$, corresponds to a non-cuspidal rational point $P$ of $X_0(p)$. Moreover, since $E$ is assumed to have potentially multiplicative reduction at a prime $\ell>2$ (for simplicity, we will take $\ell$ to be different from $p$), the reduction of $P$ mod $\ell$ coincides with a cusp. By an application of the Atkin--Lehner involution if necessary, we may assume that this cusp is $\infty$. Mazur, in \cite{maz} and \cite{maz1}, proves two fundamental claims that make the remainder of the argument work: firstly, he proves that $J_0(p):=\Jac(X_0(p))$ has a non-trivial rank $0$ quotient $A$ defined over $\Q$; secondly, he shows that the morphism
\begin{equation*}
f:X_0(p)_{\Z_{\ell}}\rightarrow A_{\Z_{\ell}},
\end{equation*}
defined by composing the Abel--Jacobi map $X_0(p)_{\Z_{\ell}}\rightarrow J_0(p)_{\Z_{\ell}}$ with the natural projection $J_0(p)_{\Z_{\ell}}\rightarrow A_{\Z_{\ell}}$ is a formal immersion at $\infty_{\ell}$, the reduction of $\infty$ to the special fibre. Here, $X_0(p)_{\Z_{\ell}}$ stands for the minimal regular model of $X_0(p)$ over $\Z_{\ell}$, and $A_{\Z_{\ell}}$ and $J_0(p)_{\Z_{\ell}}$ for the N\'eron models of $A$ and $J_0(p)$ over $\Z_{\ell}$. As we have seen, $P$ reduces to $\infty_{\ell}$, which means that $f(P)$ will reduce to $0$ mod $\ell$. However, $f(P)$ is rational, and, since $A(\Q)$ is finite, it must be a torsion point of $A$. Now, $\Tors A(\Q)$ injects, by reduction mod $\ell$, into $A(\F_{\ell})$. Therefore, $f(P)=0$. Let
\begin{equation*}
s_{\infty}:\Spec\Z_{\ell}\rightarrow X_0(p)_{\Z_{\ell}}\quad\text{and}\quad s_P:\Spec\Z_{\ell}\rightarrow X_0(p)_{\Z_{\ell}}
\end{equation*}
be, respectively, the sections corresponding to $\infty$ and $P$. We find that $f\circ s_{\infty}=f\circ s_P$. Therefore,
\begin{equation*}
\hat{s}_{\infty}^*\circ\hat{f}^*=\hat{s}_P^*\circ\hat{f}^*,
\end{equation*}
where $\hat{s}^*_{\infty}, \hat{s}^*_P$ and $\hat{f}^*$ are the induced maps on the completed local rings at $\infty_{\ell}, P_{\ell}$ and $0_{\ell}$, the mod $\ell$ reductions of $\infty, P$ and $0$, respectively. But since $\hat{f}^*$ is surjective, this yields $\hat{s}_{\infty}^*=\hat{s}_P^*$, implying that $P=\infty$. However, we asserted above that $P$ is non-cuspidal, and, therefore, we have a contradiction.
\newline

We might try to apply this strategy to other modular curves, such as $X_{\mathrm{ns}}^+(p)$. However, as it is remarked in \cite{dar}, assuming the Birch and Swinnerton-Dyer conjecture, it can be shown that the Jacobian of $X_{\mathrm{ns}}^+(p)$ does not have a non-trivial rank~$0$ quotient defined over $\Q$. Indeed, by Chen \cite{chen} (see Theorem \ref{cheniso} in section \ref{sec2}),  the Jacobian of $X_{\mathrm{ns}}^+(p)$ is isogenous to the Jacobian of $X_0^+(p^2):=X_0(p^2)/w_{p^2}$, and the $L$-functions of every weight $2$ cusp form of $\Jac(X_0^+(p^2))$ have sign $-1$ in their functional equations. By the Birch and Swinnerton-Dyer conjecture, every non-trivial quotient of $\Jac(X_{\mathrm{ns}}^+(p))$ will then have Mordell--Weil rank $\geq 1$. But, as we have seen, the existence of a non-trivial rank $0$ quotient is necessary for Mazur's method to work. 
\newline

Nevertheless, using a result by Imin Chen \cite{chen}, later generalized by de Smit and Edixhoven \cite{smi}, Darmon and Merel \cite{dar} proved the following result.

\begin{thm}[{\cite[Proposition 7.1]{dar}}]\label{step}
Let $r=2$ or $3$, and let $p>3$ be a prime. There exists a non-trivial optimal quotient $A$ of the Jacobian $J_{0,\mathrm{ns}}^+(r,p)$ of the curve \begin{equation*}X_0(r)\times_{X(1)}X_{\mathrm{ns}}^+(p)\end{equation*} such that $A(\Q)$ is finite. Moreover, the kernel of the canonical projection $J_{0,\mathrm{ns}}^+(r,p)\rightarrow A$ is stable under the action of the Hecke operators $T_n$ for $n$ coprime to $p$.
\end{thm}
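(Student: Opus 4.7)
The plan is to construct $A$ as the winding quotient of $J := J_{0,\mathrm{ns}}^+(r,p)$, following the paradigm introduced by Mazur and adapted by Merel. Write $X := X_0(r) \times_{X(1)} X_{\mathrm{ns}}^+(p)$, let $\T'$ denote the subalgebra of $\End(J)$ generated by the Hecke operators $T_n$ with $(n,p) = 1$, and let $e \in H_1(X, \Q)$ be the class of the geodesic between the cusps $0$ and $\infty$. I would define $I_e \subset \T'$ to be the annihilator of $e$ under the natural Hecke pairing with modular symbols --- equivalently, the ideal of operators $T \in \T'$ such that $L(Tf, 1) = 0$ for every normalised eigenform $f$ appearing in $J$. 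Setting $A := J/I_e J$, and then passing to its optimal refinement (quotient by the connected component of the kernel) if needed, gives a candidate whose kernel $I_e J$ is a $\T'$-submodule by construction; this settles the Hecke-stability half of the statement.

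For the finiteness of $A(\Q)$, I would appeal to Kolyvagin's theorem in the form extended by Kolyvagin--Logachev to modular abelian varieties of $\GL_2$-type. By the very definition of $I_e$, every simple isotypic factor $A_f$ of $A$ corresponds to a newform $f$ with $L(f,1) \neq 0$, so the Kolyvagin--Logachev theorem forces $A_f(\Q)$ to be finite. Summing over the finitely many simple factors yields that $A(\Q)$ is finite.

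The substantive content, and what I expect to be the main obstacle, is proving that $A$ is non-trivial. Here I would transport the question to $J_0(rp^2)$ via Theorem \ref{cheniso} and its generalisation by de Smit and Edixhoven: the Jacobian $J$ is Hecke-equivariantly isogenous over $\Q$ to the $w_{p^2} = +1$ component of a suitable quotient of $J_0(rp^2)$. For a newform $f \in S_2(\Gamma_0(rp^2))^{\mathrm{new}}$ with $w_{p^2}(f) = +1$, the sign of the functional equation of $L(f,s)$ is $-w_r(f) \cdot w_{p^2}(f) = -w_r(f)$, so newforms with $w_r(f) = -1$ have sign $+1$ and are precisely the candidates admitting a non-zero central $L$-value. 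The existence of such a newform with $L(f, 1) \neq 0$ for every prime $p > 3$ and $r \in \{2,3\}$ is the delicate point: note that an analogous statement fails for $J_{\mathrm{ns}}^+(p)$ itself, because there every newform in the relevant isogeny factor has $w_{p^2} = +1$ with trivial $w_r$ contribution, forcing sign $-1$. Introducing the auxiliary level $r$ is what creates room for a sign-$+1$ eigenform.

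To produce such a form, I would combine a dimension count on $S_2(\Gamma_0(rp^2))^{\mathrm{new}}$ restricted to the $(w_{p^2}, w_r) = (+1, -1)$ eigenspace (which is non-empty once $p$ is sufficiently large, and can be checked by hand for small $p > 3$) with an analytic non-vanishing result for central values --- for instance, a mollification or averaging argument of Iwaniec--Sarnak / Kowalski--Michel type, or an explicit Eisenstein/congruence argument exploiting the character-theoretic description of the Chen isogeny. This step is where I expect almost all of the real work to lie; once a single eigenform $f$ in the relevant eigenspace with $L(f,1) \neq 0$ is produced, its isotypic quotient contributes non-trivially to $A$, and the theorem follows.
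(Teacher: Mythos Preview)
Your overall architecture matches the paper (and Darmon--Merel) closely: winding quotient, Kolyvagin--Logach\"ev for finiteness, and Chen's isogeny to reduce non-triviality to the existence of a newform on $\Gamma_0(rp^2)$ with the right Atkin--Lehner signs and $L(f,1)\neq 0$. Your sign analysis is correct.

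There is, however, a genuine ordering issue. You define the winding element $e$ as ``the geodesic between the cusps $0$ and $\infty$'' directly on $X = X_0(r)\times_{X(1)}X_{\mathrm{ns}}^+(p)$, but this curve has no canonical cusps called $0$ and $\infty$: the cusps of $X_{\mathrm{ns}}^+(p)$ form a single Galois orbit defined only over $\Q(\zeta_p)^+$, so the Manin--Drinfeld symbol $\{0,\infty\}$ is not available there without further work. The paper (following Darmon--Merel) avoids this by transporting through Chen's isogeny \emph{first}: one passes from $J_{0,\mathrm{ns}}^+(r,p)$ to $\Jac(X_0^+(rp^2))^{p\text{-}\mathrm{new}}$, and then to $\Jac(X_0^+(rp^2))^{\mathrm{new}}$, where $0$ and $\infty$ are the standard rational cusps, the Drinfeld--Manin theorem applies verbatim, and the Hecke algebra is the usual one acting on $S_2(\Gamma_0(rp^2))$. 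The winding ideal $I_e$ and the quotient $A$ are then defined on that side, and the Kolyvagin--Logach\"ev input is the standard one for quotients of $J_0(N)$. Your proposal becomes correct once you swap the order: isogeny first, winding construction second.

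For the non-triviality step, your analytic route (averaging or mollification over the family) would certainly work but is heavier than needed; Darmon--Merel settle it by a more direct argument, and the paper simply cites them. Your parenthetical ``Eisenstein/congruence argument'' is closer in spirit to what is actually done.
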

With it, they were able to show the following:
\begin{thm}[{\cite[Theorem 8.1]{dar}}]\label{hei}
Let $E$ be an elliptic curve over $\Q$ admitting a $\Q$-rational $r$-isogeny, where $r=2$ or $3$. Suppose that that there exists a prime $p>3$ such that the image of $\bar{\rho}_{E,p}$ is contained in the normaliser of a non-split Cartan subgroup of $\GL_2(\F_p)$. Then $j(E)\in\Z[\frac{1}{p}]$.
\end{thm}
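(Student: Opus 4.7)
The plan is to apply Mazur's formal immersion method, as summarised in the introduction, to the modular curve $X:=X_0(r)\times_{X(1)}X_{\mathrm{ns}}^+(p)$, whose Jacobian is $J_{0,\mathrm{ns}}^+(r,p)$ appearing in Theorem \ref{step}. Under the hypotheses of the theorem, the elliptic curve $E$ comes equipped with a cyclic subgroup $C_r\subset E[r]$ of order $r$ (from the rational $r$-isogeny) and a non-split Cartan-type subgroup structure $H$ on $E[p]$ (coming from $\im\bar{\rho}_{E,p}$ being contained in the normaliser of a non-split Cartan subgroup of $\GL_2(\F_p)$). By the moduli interpretation of $X$, the triple $(E,C_r,H)$ defines a non-cuspidal rational point $P\in X(\Q)$.

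I would then argue by contradiction. If $j(E)\notin\Z[\tfrac{1}{p}]$, there exists a prime $\ell\neq p$ with $v_{\ell}(j(E))<0$, so $E$ has potentially multiplicative reduction at $\ell$. Exactly as in Mazur's argument, this forces $P$ to reduce to a cusp $c_{\ell}$ of the special fibre $X_{\F_{\ell}}$, and, after possibly composing with Atkin--Lehner involutions acting on the $X_0(r)$ and $X_{\mathrm{ns}}^+(p)$ factors (which permute the cusps), I may assume $c_{\ell}$ is the reduction of a distinguished rational cusp $c\in X(\Q)$ chosen so that formal immersion can be established there. The primes $\ell=2,3$ will need some additional care, since the strategy typically assumes $\ell>2$, and potentially multiplicative reduction concentrated at such $\ell$ may require an explicit auxiliary argument or a twist to move the bad reduction to a larger prime.

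Next, I would invoke Theorem \ref{step} to produce the optimal rank-$0$ quotient $A$ of $J_{0,\mathrm{ns}}^+(r,p)$, and consider the morphism $f:X_{\Z_{\ell}}\to A_{\Z_{\ell}}$ of N\'eron/minimal regular models obtained by composing the Abel--Jacobi embedding based at $c$ with the projection $J_{0,\mathrm{ns}}^+(r,p)\to A$. Since $A(\Q)$ is finite, $f(P)$ is a rational torsion point; since the reductions of $P$ and $c$ coincide modulo $\ell$ and $\Tors A(\Q)$ injects into $A(\F_{\ell})$ at primes $\ell$ of good reduction for $A$, we obtain $f(P)=f(c)=0$. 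The concluding step is identical to the one recalled in the introduction: if $f$ is a formal immersion at $c_{\ell}$, then the sections $s_P$ and $s_c$ of $X_{\Z_{\ell}}$ must agree, yielding $P=c$ in $X(\Q)$ and contradicting the non-cuspidality of $P$.

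The main obstacle is verifying that $f$ is indeed a formal immersion at $c_{\ell}$. Via the identification of $\cotg_0 A$ with a Hecke-stable subspace of the weight-$2$ cusp forms on the relevant congruence group (Hecke-stability being precisely the final assertion of Theorem \ref{step}), formal immersion at $c_{\ell}$ reduces to producing a form $\omega$ in $\cotg_0 A$ whose $q$-expansion at $c$ has non-vanishing coefficient of $q$. I would exploit the optimality of $A$ and its characterisation by a system of Hecke eigenvalues: the standard relationship between Hecke eigenvalues and Fourier coefficients, combined with the Chen/de Smit--Edixhoven isogeny between $\Jac(X_{\mathrm{ns}}^+(p))$ and $\Jac(X_0^+(p^2))$, should allow the non-vanishing at $c$ to be transported to the non-vanishing of the first Fourier coefficient of a normalised newform at the standard cusp $\infty$, where it equals $1$ by definition. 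Controlling the behaviour of $q$-expansions under these identifications, and keeping track of which cusp $P$ actually reduces to, is the delicate part of the argument.
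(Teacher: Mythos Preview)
Your overall strategy is the same as the paper's (and Darmon--Merel's): produce a rational point on $X_{0,\mathrm{ns}}^+(r,p)$, reduce to a cusp at a bad prime, and apply the formal immersion argument using the rank-$0$ quotient $A$ of Theorem~\ref{step}. But there is a genuine gap in your reduction step.

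You write that ``after possibly composing with Atkin--Lehner involutions \ldots\ I may assume $c_\ell$ is the reduction of a distinguished rational cusp $c\in X(\Q)$.'' No such cusp exists. Unlike $X_0(p)$, the curve $X_{\mathrm{ns}}^+(p)$ has \emph{no} $\Q$-rational cusps for $p>3$: its $(p-1)/2$ cusps form a single Galois orbit defined only over $\Q(\zeta_p)^+$, and the same is therefore true of the cusps of the fibre product $X_{0,\mathrm{ns}}^+(r,p)$. So your Atkin--Lehner manoeuvre cannot land you on a rational cusp, and the Abel--Jacobi map based at a rational cusp is not available.

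In the actual argument this irrationality of the cusps is not an obstacle to be sidestepped but a feature to be exploited. Because $P\in X(\Q)$ is Galois-invariant while the cusps are not, the only way $P$ can meet a cusp on the fibre over $\ell$ is if that cusp becomes $\F_\ell$-rational, which forces $\ell\equiv\pm 1\pmod{p}$. This congruence is exactly the hypothesis under which the formal immersion at $\infty_\ell$ is established in \cite{dar}; your worry about ``additional care at $\ell=2,3$'' is therefore misplaced---the relevant constraint on $\ell$ is $\ell\equiv\pm1\pmod p$, not its size. One then works over $\Z_\ell$ (where the cusp is rational) and shows separately, as in \cite[Lemma~8.3]{dar}, that the image of $P$ in $A$ is torsion, before concluding as you describe.
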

In fact, their methods work not only when $r=2$ or $3$, but whenever $X_0(r)$ has genus $0$; in other words, whenever $r\in\{2,3,5,7,13\}$ (subject to the condition that, in the results aforementioned, $p\notin\{2,3,5,7,13\}$). Therefore, we have the following theorem:
\begin{thm}\label{dm}
Set $\Sigma:=\{2,3,5,7,13\}$. Let $E$ be an elliptic curve over $\Q$ admitting a $\Q$-rational $r$-isogeny, for some $r\in\Sigma$. Suppose that that there exists a prime $p\notin\Sigma$ such that the image of $\bar{\rho}_{E,p}$ is contained in the normaliser of a non-split Cartan subgroup of $\GL_2(\F_p)$. Then $j(E)\in\Z[\frac{1}{p}]$.
\end{thm}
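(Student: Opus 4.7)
The plan is to mimic the proof of Theorem \ref{hei} for every $r\in\Sigma$, exploiting only the fact that $X_0(r)$ has genus $0$ whenever $r\in\Sigma$. The extra hypothesis $p\notin\Sigma$ serves to ensure $p\nmid r$ and to avoid small-characteristic pathologies in the arguments below. Concretely, assume $E/\Q$ admits a $\Q$-rational $r$-isogeny with $r\in\Sigma$, and that the image of $\bar{\rho}_{E,p}$ lies in the normaliser of a non-split Cartan of $\GL_2(\F_p)$ for some $p\notin\Sigma$. Then $E$ determines a $\Q$-rational non-cuspidal point $P$ on $X:=X_0(r)\times_{X(1)}X_{\mathrm{ns}}^+(p)$. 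Arguing by contradiction, suppose $j(E)\notin\Z[1/p]$, so that there is a prime $\ell\notin\{p,r\}$ at which $E$ has potentially multiplicative reduction; at $\ell$, the point $P$ reduces to a cusp of $X$.

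The first task is to extend Theorem \ref{step} to all $r\in\Sigma$: produce a non-trivial optimal quotient $A$ of $J_{0,\mathrm{ns}}^+(r,p)$ defined over $\Q$ with $A(\Q)$ finite and with kernel stable under the Hecke operators $T_n$ for $(n,p)=1$. The construction in \cite{dar} combines Chen's isogeny $\Jac(X_{\mathrm{ns}}^+(p))\sim\Jac(X_0^+(p^2))$ with its fibre-product analogue (available via \cite{chen} and \cite{smi}), realising $J_{0,\mathrm{ns}}^+(r,p)$ up to isogeny inside the $p$-new part of $\Jac(X_0(rp^2))$ cut out by $w_{p^2}=+1$. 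Because $X_0(r)$ has genus $0$ for $r\in\Sigma$, $\Jac(X_0(r))$ is trivial and no new contribution is introduced by the $X_0(r)$ factor; one picks the quotient corresponding to those weight-$2$ newforms whose functional equation has sign $+1$, and Gross--Zagier together with Kolyvagin forces these quotients to be of analytic, hence algebraic, rank $0$. The Hecke stability is automatic from the construction.

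Granted this $A$, the conclusion now follows from Mazur's formal immersion method in the same way as in \cite{dar}. The cusp to which $P$ reduces at $\ell$ is rational, since the cusps of $X_0(r)$ are all rational for $r\in\Sigma$ and the cusps of $X_{\mathrm{ns}}^+(p)$ above which one works are likewise rational. Composing the Abel--Jacobi embedding $X\hookrightarrow J_{0,\mathrm{ns}}^+(r,p)$ with the projection to $A$ yields a morphism $f:X_{\Z_\ell}\to A_{\Z_\ell}$, and one verifies that $f$ is a formal immersion at the reduction of this cusp. The verification is a Hecke-theoretic non-vanishing on the cotangent space of $A$ at the identity, of the same shape as in \cite{dar}, and, since it depends only on properties of the newform quotient $A$ and not on the shape of the genus-$0$ factor $X_0(r)$, it carries over verbatim. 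Finiteness of $A(\Q)$ together with the injection $\Tors A(\Q)\hookrightarrow A(\F_\ell)$ forces $f(P)=0$; the formal immersion then yields $P=\infty$, contradicting the non-cuspidality of $P$.

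The main obstacle is the first step, namely the non-vanishing of the rank-$0$ quotient $A$ for $r\in\{5,7,13\}$. For $r\in\{2,3\}$, the existence of such a quotient is established in \cite{dar} via an explicit analysis of the signs in the functional equations of weight-$2$ newforms contributing to $J_{0,\mathrm{ns}}^+(r,p)$. Extending this to the remaining $r\in\Sigma$ requires verifying, via Chen's isogeny and a dimension count on the $p$-new subspace of level $rp^2$, that there exists at least one such newform of sign $+1$; while this is expected on parity grounds, it is the genuine content that must be checked to convert the qualitative statement ``the method works whenever $X_0(r)$ has genus $0$'' into the quantitative Theorem \ref{dm}.
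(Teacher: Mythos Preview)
Your overall strategy---construct a non-trivial rank-$0$ quotient $A$ of $J_{0,\mathrm{ns}}^+(r,p)$, prove formal immersion at a cusp, and run Mazur's argument---is exactly the paper's approach (following Darmon--Merel). Two points, however, need correction.

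First, and most concretely: your claim that ``the cusps of $X_{\mathrm{ns}}^+(p)$ above which one works are likewise rational'' is false. The cusps of $X_{\mathrm{ns}}^+(p)$ form a single Galois orbit defined only over $\Q(\zeta_p)^+$, not over $\Q$. This is not cosmetic; it drives a key step in the argument. Since $P\in X(\Q)$ reduces to a cusp modulo $\ell$, that cusp must be $\F_\ell$-rational, and this forces $\ell\equiv\pm 1\pmod{p}$. It is precisely for such $\ell$ that the formal immersion at $\infty_\ell$ is established in \cite{dar}, and the paper invokes this congruence explicitly. Your version, by declaring the cusp rational, sidesteps a genuine constraint that the actual proof must handle. (Incidentally, there is no need to take $\ell\neq r$; $\ell\neq p$ suffices.)

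Second, your construction of $A$ as ``the quotient corresponding to newforms with sign $+1$'' is looser than what is done. Sign $+1$ does not give $L(f,1)\neq 0$, and Kolyvagin--Logach\"ev needs the latter. The paper (following \cite{dar}) instead uses the \emph{winding quotient}: take the modular symbol $e=\{0,\infty\}\in H_1(X_0^+(rp^2),\Q)^+$, its image $e'$ in the new part, set $I_e=\Ann_{\T_\Z}(e')$ and $A=\Jac(X_0^+(rp^2))^{\mathrm{new}}/I_e\Jac(X_0^+(rp^2))^{\mathrm{new}}$. Finiteness of $A(\Q)$ follows from Kolyvagin--Logach\"ev; the non-triviality of $A$---which you rightly flag as the genuine content---is proved uniformly in \cite{dar} and does not require a separate sign/dimension count for each $r\in\Sigma$.
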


The proof of this theorem is essentially the same as the one presented by Darmon and Merel \cite{dar} for Theorem \ref{hei}. Section \ref{sec2} will provide an outline of it, the details being referred to \cite{dar}.
\newline

There are essentially two steps in the proof of Theorem \ref{main}. The first one is to prove that, under the conditions of Theorem \ref{dm}, we can actually conclude that $j(E)\in\Z$. The second one consists, firstly, in noting that, for $p>37$, the image of the mod $p$ Galois representation of any non-CM elliptic curve over $\Q$ will be either contained in the normaliser of a non-split Cartan subgroup of $\GL_2(\F_p)$, or will be the whole of $\GL_2(\F_p)$; and, secondly, in checking that, for $r\in\{2,3,5,7,13\}$, the non-CM elliptic curves corresponding to the rational points of $X_0(r)$ with integral $j$-invariant have surjective mod $p$ Galois representations. Separately, we check the same thing for the non-cuspidal rational points of $X_0(11), X_0(17)$ and $X_0(37)$. This will yield the theorem.
\begin{notat}
We will denote the normaliser of a split Cartan subgroup of $\GL_{2}(\F_p)$ by $N_{\mathrm{sp}}$, and the normaliser of a non-split Cartan subgroup by $N_{\mathrm{ns}}$. The quotients $X(p)/N_{\mathrm{sp}}$ and $X(p)/N_{\mathrm{ns}}$ will be abbreviated to $X_{\mathrm{sp}}^+(p)$ and $X_{\mathrm{ns}}^+(p)$, respectively. Moreover, we will denote the fibre product $X_0(r)\times_{X(1)}X_{\mathrm{sp}}^+(p)$ by $X_{0,\mathrm{sp}}^+(r,p)$, and, similarly, the fibre product $X_0(r)\times_{X(1)}X_{\mathrm{ns}}^+(p)$ by $X_{0,\mathrm{ns}}^+(r,p)$.
\end{notat}
\begin{ack}
I would like to thank Samir Siksek for all his helpful advice and suggestions. Also, I want to thank the referee for the useful comments and corrections. Finally, I would like to express my gratitude to the EPSRC for the financial support. 
\end{ack}

\section{Proof of Theorem \ref{main}}\label{msec}

%There is a reason why the last case of the classification of maximal subgroups of $\GL_2(\F_p)$ has been so hard to rule out: assuming the BSD conjecture, the Jacobian $J_{\mathrm{ns}}^+(p)$ of $X_{\mathrm{ns}}^+(p)$ does not have any rank $0$ quotient, which prevents us from using Mazur's formal immersion strategy, at least directly. However, using a result by Imin Chen \cite{chen}, later generalized by de Smit and Edixhoven \cite{smi}, Darmon and Merel \cite{dar} were able to prove that, for $r=2$ or $3$, the Jacobian of $X_{0,\mathrm{ns}}^+(r,p):=X_0(r)\times_{X(1)}X_{\mathrm{ns}}^+(p)$ does have a rank $0$ quotient. In fact, their result extends to any prime $r$ for which $g(X_0(r))=0$, i.e., $r\in\{2,3,5,7,13\}$, and the proof is essentially the same (see Section \ref{sec2}). 

% and to successfully use the method of formal immersion to conclude 

%\begin{thm}[Darmon--Merel {\cite[p. 97]{dar}}]
%Set $\Sigma:=\{2,3,5,7,13\}$. Let $E$ be an elliptic curve over $\Q$ admitting a $\Q$-rational $r$-isogeny, for some $r\in\Sigma$. Suppose that that there exists a prime $p\notin\Sigma$ such that the image of $\bar{\rho}_{E,p}$ is contained in the normaliser of a non-split Cartan subgroup of $\GL_2(\F_p)$. Then $j(E)\in\Z[\frac{1}{p}]$.
%\end{thm}
%\begin{proof}
%The cases $r=2$ and $r=3$ are treated in \cite{dar}, and the proof presented there carries over to $r=5, 7, 13$. We will briefly discuss this in Section \ref{sec2}.
%\end{proof}

One of the fundamental observations of this paper is the following improvement of Theorem \ref{dm}:

\begin{prop}\label{fund}
Set $\Sigma:=\{2,3,5,7,13\}$. Let $E$ be an elliptic curve over $\Q$ admitting a $\Q$-rational $r$-isogeny for some $r\in\Sigma$. Suppose that there exists a prime $p\notin\Sigma$ such that the image of $\bar{\rho}_{E,p}$ is contained in the normaliser of a non-split Cartan subgroup of $\GL_2(\F_p)$. Then $j(E)\in\Z$.
\end{prop}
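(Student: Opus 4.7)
The strategy is to combine Theorem~\ref{dm} with a short local argument at the prime $p$. Theorem~\ref{dm} already supplies $j(E)\in\Z[\frac{1}{p}]$, so it suffices to show that $E$ has potentially good reduction at $p$, i.e.\ that $v_p(j(E))\geq 0$. So assume, for contradiction, that $E$ has potentially multiplicative reduction at $p$.

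Choose a quadratic character $\chi$ of $G_{\Q_p}$ (possibly trivial, possibly unramified or ramified) such that $E\otimes\chi$ becomes a Tate curve over $\Q_p$. In a suitable basis of $E[p]$, the restriction of $\bar{\rho}_{E,p}$ to a decomposition group at $p$ then takes the shape
\begin{equation*}
\bar{\rho}_{E,p}|_{G_{\Q_p}}(\sigma)=\begin{pmatrix}\chi(\sigma)\chi_p(\sigma) & \ast\\ 0 & \chi(\sigma)\end{pmatrix},
\end{equation*}
where $\chi_p$ denotes the mod-$p$ cyclotomic character.

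The key input is a small piece of group theory: any element of $N_{\mathrm{ns}}\subset\GL_2(\F_p)$ whose two eigenvalues both lie in $\F_p$ is either a scalar matrix or has trace $0$. This follows from the decomposition of $N_{\mathrm{ns}}$ as the disjoint union of a non-split Cartan (isomorphic to $\F_{p^2}^{\times}$, whose elements have Galois-conjugate eigenvalues in $\F_{p^2}$, so that $\F_p$-rationality forces them to coincide and the element to be scalar) and a non-trivial coset (on which conjugation realises the non-trivial Galois action on the Cartan, forcing trace $0$). Applying this classification to the displayed matrix for each $\sigma$ in the inertia subgroup $I_p\subset G_{\Q_p}$ yields $\chi_p(\sigma)\in\{1,-1\}$: scalarity of $\bar{\rho}_{E,p}(\sigma)$ forces $\chi_p(\sigma)=1$, and the trace-zero condition forces $\chi_p(\sigma)=-1$. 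But $\chi_p|_{I_p}$ is surjective onto $\F_p^{\times}$, which has $p-1$ elements, so this requires $p-1\leq 2$, impossible for $p>3$. Hence $E$ has potentially good reduction at $p$, so $v_p(j(E))\geq 0$, and combining with Theorem~\ref{dm} gives $j(E)\in\Z$.

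The argument is essentially linear-algebraic; the $r$-isogeny hypothesis enters only through Theorem~\ref{dm}. The one delicate point worth flagging is keeping track of the quadratic twist $\chi$ in the potentially-multiplicative-but-additive case, where $\chi$ is ramified and hence $\chi|_{I_p}\neq 1$; nonetheless, the ratio of the two diagonal characters of $\bar{\rho}_{E,p}|_{I_p}$ is always $\chi_p|_{I_p}$, so the contradiction goes through uniformly in all cases of potentially multiplicative reduction at $p$.
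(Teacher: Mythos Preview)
Your proof is correct and follows essentially the same route as the paper: combine Theorem~\ref{dm} with a local Tate-curve argument at $p$ showing that any $\sigma$ in the decomposition group has $\chi_p(\sigma)\in\{\pm1\}$, contradicting surjectivity of $\chi_p$ for $p>3$. The only cosmetic difference is that the paper squares $\bar{\rho}_{E,p}(\sigma)$ to land in $C_{\mathrm{ns}}$ and then reads off that both $\F_p$-rational eigenvalues must coincide, whereas you argue directly with the scalar/trace-zero dichotomy on $N_{\mathrm{ns}}$; the conclusions are identical.
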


This proposition is actually a corollary of the following result.
\begin{prop}\label{good}
Let $E$ be an elliptic curve defined over $\Q$ and $p\geq 5$ a prime number such that $\bar{\rho}_{E,p}(G_{\Q})$ is contained in the normaliser of a non-split Cartan subgroup of $\GL_2(\F_p)$. Then,  for any prime $\ell\not\equiv\pm 1\pmod{p}$, the elliptic curve $E$ has potentially good reduction at $\ell$.
\end{prop}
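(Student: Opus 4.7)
The plan is to argue by contradiction: suppose $E$ has potentially multiplicative reduction at a prime $\ell$, and deduce that $\ell \equiv \pm 1 \pmod{p}$. The organising observation is that $|N_{\mathrm{ns}}| = 2(p^2-1)$ is coprime to $p$, so by Maschke's theorem every $\F_p$-representation factoring through a subgroup of $N_{\mathrm{ns}}$ is semisimple. In particular $\bar\rho_{E,p}|_{G_{\Q_\ell}}$ is semisimple, and its image contains no non-trivial unipotent element.

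For the case $\ell \neq p$, I would let $\chi$ be the (at most quadratic) character of $G_{\Q_\ell}$ such that $E^\chi$ has split multiplicative reduction at $\ell$. The Tate parametrisation then yields a short exact sequence
\begin{equation*}
0 \to \F_p(1) \to \bar\rho_{E^\chi,p}|_{G_{\Q_\ell}} \to \F_p \to 0.
\end{equation*}
Twisting by the quadratic $\chi$ preserves semisimplicity, so this sequence must in fact split. Consequently $\bar\rho_{E,p}(\Frob_\ell)$ is diagonalisable over $\F_p$ with eigenvalues $\epsilon$ and $\epsilon\ell$, where $\epsilon := \chi(\Frob_\ell) \in \{\pm 1\}$.

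The next step is to classify the elements of $N_{\mathrm{ns}}$ having $\F_p$-rational eigenvalues. Writing $C$ for the non-split Cartan, an element of $C$ corresponds to multiplication by some $\alpha \in \F_{p^2}^*$ and has eigenvalues $\alpha, \alpha^p$; these lie in $\F_p$ exactly when $\alpha \in \F_p^*$, in which case the element is a scalar. Any element of $N_{\mathrm{ns}} \setminus C$ is traceless. Applied to $\bar\rho_{E,p}(\Frob_\ell)$: either the matrix is scalar, forcing $\epsilon = \epsilon\ell$ and hence $\ell \equiv 1 \pmod{p}$; or it is traceless, forcing $\epsilon + \epsilon\ell \equiv 0$ and hence $\ell \equiv -1 \pmod{p}$. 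Both outcomes contradict the hypothesis on $\ell$.

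For $\ell = p$ the same strategy works, with tame inertia playing the role previously played by Frobenius: after the appropriate twist, the tame action on $\bar\rho_{E^\chi,p}$ has semisimplification $\omega \oplus 1$, where $\omega$ is the fundamental tame character, and semisimplicity promotes this to the genuine action of tame inertia. Since $p \geq 5$, the image of $\omega$ has order $p-1 > 2$, so one can pick $\tau \in I_p$ with $\omega(\tau) \neq \pm 1$; then $\bar\rho_{E,p}(\tau)$ has two distinct $\F_p$-rational eigenvalues that are neither equal nor additive inverses, contradicting the classification above. The delicate point in writing this up carefully will be the bookkeeping around the twist $\chi$---which may itself be ramified, especially when $\ell = p$---and the identification of the semisimplification with the genuine representation; everything else is forced once one notices that $\gcd(|N_{\mathrm{ns}}|, p) = 1$.
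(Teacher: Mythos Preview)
Your argument is correct and follows essentially the same line as the paper's: use the Tate uniformisation at a prime of potentially multiplicative reduction to see that $\bar\rho_{E,p}|_{G_{\Q_\ell}}$ is upper-triangular with diagonal characters $\psi\chi_p,\psi$ for a quadratic $\psi$, and then exploit the structure of $N_{\mathrm{ns}}$ to force $\chi_p(\sigma)^2=1$ (equivalently, your dichotomy $\ell\equiv 1$ or $\ell\equiv -1$). The only cosmetic differences are that the paper squares each element to land in $C_{\mathrm{ns}}$ instead of splitting into the cases $C_{\mathrm{ns}}$ versus $N_{\mathrm{ns}}\setminus C_{\mathrm{ns}}$, and it never invokes Maschke---the eigenvalues can be read off the upper-triangular form regardless of whether the extension splits, so your semisimplicity step, while valid, is not actually needed.
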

\begin{proof}
Suppose $\bar{\rho}_{E,p}(G_{\Q})$ is contained in the normaliser $N_{\mathrm{ns}}$ of a non-split Cartan subgroup $C_{\mathrm{ns}}$ of $\GL_2(\F_p)$. Let $\ell$ be a prime of potentially multiplicative reduction. For the remainder of this proof, we will write $E$ for $E_{\Q_{\ell}}$, the elliptic curve obtained from $E$ by extension of scalars to $\Q_{\ell}$. Also, we fix an embedding $\bar{\Q}\hookrightarrow\bar{\Q}_{\ell}$, which amounts to a choice of decomposition subgroup $G_{\Q_{\ell}}\hookrightarrow G_{\Q}$ over $\ell$.

Since $E$ has potentially multiplicative reduction, it is a quadratic twist of a Tate curve $E_q$, $q\in\Q_{\ell}^{\times}$. Let $\psi$ be the quadratic character associated to this twist ($\psi$ may well be the trivial character). Then $\bar{\rho}_{E,p}\cong\bar{\rho}_{E_q,p}\otimes\psi$. Since we have
\begin{equation*}
\bar{\rho}_{E_q,p}\sim \begin{pmatrix}\chi_{p} & *\\ 0 & 1\end{pmatrix},
\end{equation*}
where $\chi_{p}:G_{\Q_{\ell}}\rightarrow \F_p^{\times}$ is the mod $p$ cyclotomic character, we conclude that
\begin{equation*}
\bar{\rho}_{E,p}\sim\begin{pmatrix}\psi\chi_{p} & *\\ 0 & \psi\end{pmatrix}.
\end{equation*}

Now, note that $C_{\mathrm{ns}}$, as a subgroup of $\GL_2(\F_{p^2})$, is conjugate to the subgroup
\begin{equation}\label{norm}
\left\{\begin{pmatrix} a & 0\\ 0 & a^p\end{pmatrix}:a\in\F_{p^2}^{\times}\right\}\subseteq\GL_2(\F_{p^2})
\end{equation}

Since $[N_{\mathrm{ns}}:C_{\mathrm{ns}}]=2$, we have $\bar{\rho}_{E,p}(\sigma)^2\in C_{\mathrm{ns}}$, for all $\sigma\in G_{\Q_{\ell}}$. Also, since $\psi$ is quadratic, the eigenvalues of $\bar{\rho}_{E,p}(\sigma)^2$ are $\chi_p(\sigma)^2$ and $1$. It then follows from (\ref{norm}) that $\chi_p(\sigma)^2=1$, for all $\sigma\in G_{\Q_{\ell}}$. If $\ell=p$, then we know that $\chi_p$ surjects onto $\F_p^{\times}$, which forces $p\leq 3$. If $\ell\neq p$, then we have $\ell^2\equiv 1\pmod{p}$.
\end{proof}

\begin{proof}[Proof of Proposition \ref{fund}]
Let $E$ be an elliptic curve and $p$ a prime as in the statement of the proposition. Then we already know, due to Theorem \ref{dm}, that $j(E)\in\Z[\frac{1}{p}]$. 
Note that $E$ and $p$  satisfy the conditions of Proposition \ref{good}. It follows that $E$ has potentially good reduction at $p$. Therefore, $j(E)\in\Z$.
\end{proof}

With Proposition \ref{fund} proven, we have the most important ingredients for the proof of the main result of this paper.

%In their paper, they comment that they would like to prove that the $j$-invariant is actually an integer, but that they were prevented from doing so by the bad reduction of $J_{0,\mathrm{ns}}^+(r,p)$ at $p$.
%\newline

%In this paper, we prove the following result:

\begin{proof}[Proof of Theorem \ref{main}]
Let $E$ is an elliptic curve over $\Q$ without CM which admits a cyclic $\Q$-rational isogeny, which we may assume to be of prime degree $r$. By Mazur \cite{maz1}, we have $r\in\{2,3,5,7,11,13,17,37\}$. Suppose, for the sake of contradiction, that there exists a prime number $p>37$ such that $\bar{\rho}_{E,p}(G_{\Q})\neq \GL_2(\F_p)$. For each prime number $r$, define
\begin{equation*}
S_r=\{j(P): P\in Y_0(r)(\Q)\}.
\end{equation*}
%We are going to make use of the following famous result of Mazur:

%\begin{thm}[Mazur \cite{maz1}]\label{mazur}
%Let $E$ be an elliptic curve over $\Q$ such that $\End_{\bar{\Q}} E=\Z$. If $E$ admits a $\Q$-rational isogeny of prime degree $r$, then $r\in\{2,3,5,7,11,13,17,37\}$. 
%\end{thm}

We will distinguish two cases: we can either have $r\in\{2,3,5,7,13\}$, or $r\in\{11,17,37\}$. For $r=11, 17$, the modular curve $X_0(r)$ is an elliptic curve over $\Q$ of rank $0$; and for $r=37$, $X_0(r)$ is a curve of genus $2$ whose Jacobian has rank $0$. This makes it easy to determine the rational points of $X_0(r)$ for $r\in\{11,17,37\}$, which are, in fact, known. From {\cite[p. 98]{crem}}, we obtain

\begin{equation*}
\begin{aligned}
S_{11} &= \{-11\cdot131^3, -2^{15},  -11^2\};\\
S_{17} &= \left\{-\frac{17^2\cdot101^3}{2}, -\frac{17\cdot373^3}{2^{17}}\right\};\\
S_{37} &= \{-7\cdot137^3\cdot2083^3, -7\cdot 11^3 \}.
\end{aligned}
\end{equation*}
Therefore, if $r\in\{11,17,37\}$, the $j$-invariant of $E$ is one of the values in \mbox{$S_{11}\cup S_{17}\cup S_{37}$}. Since any two elliptic curves over $\Q$ without CM and with the same $j$-invariant are related by a quadratic twist, the surjectivity of $\bar{\rho}_{E,p}$ only depends on the $j$-invariant. The LMFDB~\cite{lmfdb} provides a long list of elliptic curves over $\Q$, together with information about the surjectivity of the mod $p$ Galois representations attached to them, such as the largest non-surjective prime --- which is computed using an algorithm of Sutherland \cite{sut}. For each of the seven values in $S_{11}\cup S_{17}\cup S_{37}$, we found an elliptic curve over $\Q$ in this database with this $j$-invariant, we verified that $-2^{15}$ is the only CM $j$-invariant, and checked that the largest non-surjective prime of each of the other six $j$-invariants is $\leq 37$. Therefore, if an elliptic curve $E$ defined over $\Q$ without CM admits a $\Q$-rational isogeny of degree $r\in\{11,17,37\}$, then the image of $\bar{\rho}_{E,p}$ is $\GL_2(\F_p)$ for all $p>37$.
\newline

Keeping up with the notation introduced at the beginning of this proof, we suppose that $r\in\{2,3,5,7,13\}$. Now, $X_0(r)$ is a smooth curve of genus $0$ with rational points (the cusps, for instance), which means that $X_0(r)$ will have infinitely many rational points. Therefore, we will not be able to use the same strategy we applied to treat the case $r\in\{11,17,37\}$. Instead, we are going to start by showing that if $\bar{\rho}_{E,p}(G_{\Q})\neq\GL_2(\F_p)$, then $j(E)\in\Z$. We are only a step away from proving this. After the successive works mentioned in the introduction, we have the following theorem: 
\begin{thm}[Bilu--Parent--Rebolledo {\cite{rebbil}}, Mazur \cite{maz1}, Serre {\cite[Lemme~18]{ser2}}]
Let $E$ be an elliptic curve over $\Q$ such that $\End_{\bar{\Q}}(E)=\Z$. If $p>37$ is a prime such that $\bar{\rho}_{E,p}(G_{\Q})\neq\GL_2(\F_p)$, then the image of $\bar{\rho}_{E,p}$ is contained in the normaliser of a non-split Cartan subgroup of $\GL_2(\F_p)$.
\end{thm}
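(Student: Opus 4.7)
The plan is to argue by elimination using the classification of maximal subgroups of $\GL_2(\F_p)$. Suppose $\bar\rho_{E,p}(G_\Q)\neq\GL_2(\F_p)$. Since a subgroup of $\GL_2(\F_p)$ either contains $\SL_2(\F_p)$ (in which case, combined with surjectivity of the determinant via the cyclotomic character, it is all of $\GL_2(\F_p)$) or is contained in a maximal subgroup not containing $\SL_2(\F_p)$, we are in the latter situation. The possibilities for such a maximal subgroup are: (i) a Borel subgroup, (ii) the normaliser $N_{\mathrm{sp}}$ of a split Cartan, (iii) the normaliser $N_{\mathrm{ns}}$ of a non-split Cartan, or (iv) a subgroup whose image in $\PGL_2(\F_p)$ is isomorphic to $A_4$, $S_4$, or $A_5$. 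The strategy is to rule out (i), (ii), and (iv) under the hypothesis $p>37$, leaving only (iii).

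For case (i), if the image lies in a Borel subgroup, then $E$ admits a $\Q$-rational subgroup of order $p$, hence a rational cyclic $p$-isogeny. By Mazur's isogeny theorem (\cite{maz1}), since $E$ has no complex multiplication, the prime $p$ must lie in $\{2,3,5,7,11,13,17,37\}$, contradicting $p>37$.

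For case (ii), I would invoke the result of Bilu--Parent--Rebolledo \cite{rebbil}: for any non-CM elliptic curve $E/\Q$ and any prime $p\geq 11$ with $p\neq 13$, the image of $\bar\rho_{E,p}$ is not contained in $N_{\mathrm{sp}}$. Since $p>37$, this case is immediately excluded. For case (iv), I would appeal to Serre's Lemme~18 of \cite{ser2}, which states that the projective image of $\bar\rho_{E,p}$ is isomorphic to $A_4$, $S_4$, or $A_5$ only for primes $p$ in a small explicit set, all of which are well below $37$.

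After these three exclusions, the only remaining possibility for a non-surjective image is case (iii), namely that $\bar\rho_{E,p}(G_\Q)$ is contained in the normaliser of a non-split Cartan subgroup, which is exactly the conclusion. The theorem is essentially a compilation, so there is no single hard step; the main ``obstacle'' is really historical, the deepest input being the Bilu--Parent--Rebolledo treatment of the split Cartan case. One small technical point to verify is that eliminating the four cases above suffices: this requires the standard group-theoretic fact (going back to Dickson) that any proper subgroup of $\GL_2(\F_p)$ with surjective determinant and whose order is divisible by $p$ is contained in a Borel, and otherwise its image in $\PGL_2(\F_p)$ is dihedral (hence contained in $N_{\mathrm{sp}}$ or $N_{\mathrm{ns}}$) or one of $A_4$, $S_4$, $A_5$. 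This classification then hands us exactly the four cases analysed above.
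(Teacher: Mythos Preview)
Your proposal is correct and follows exactly the standard assembly of the cited results; the paper itself does not give a proof of this statement at all, treating it as a black-box compilation of Mazur, Bilu--Parent--Rebolledo, and Serre. Your explanation via Dickson's classification and the elimination of cases (i), (ii), (iv) is precisely the argument implicit in the attributions.
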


This result, together with Proposition \ref{fund}, yields $j(E)\in\Z$. 
\newline

This would be of no use for us if $S_r\cap\Z$ were infinite; but it turns out that, for $r\in\{2,3,5,7,13\}$, this set is finite. In order to prove this, we can directly compute these sets, and that is what we will do now.
\newline

For an appropriate choice of local coordinate $t$ on $X_0(r)\cong\proj^1_{\Q}$, the $j$-invariant map is explicitly given by a map of the form
\begin{equation*}
t\mapsto \frac{f(t)}{t},
\end{equation*}
where $f(t)\in\Z[t]$ is a monic polynomial of degree $r+1$ and non-zero constant term. The values of $f(t)$ can be found in the following table:
\begin{center}
\begin{tabular}{|c|c|}
\hline
$r$ & $f(t)$\\
\hline \hline
$2$ & $(t+16)^3$\\
\hline
$3$ & $(t+27)(t+3)^3$\\
\hline
$5$ & $(t^2+10t+5)^3$\\
\hline
$7$ & $(t^2+5t+1)^3(t^2+13t+49)$\\
\hline
$13$ & $(t^4+7t^3+20t^2+19t+1)^3(t^2+5t+13)$\\
\hline
\end{tabular}
\end{center}
For $r=2,3$, they are easy to check (see {\cite[pp. 179-180]{antw}}); for $r=5,7,13$, we refer to {\cite[p. 54]{dahm}}. If $t$ corresponds to a $\Q$-point in $X_0(r)$, then we can write $t=a/b$, where $a,b\in\Z$, $b>0$ and $\gcd(a,b)=1$. Now, if the $j$-invariant of this point is integral, we have
\begin{equation*}
\frac{b^{r+1}f(a/b)}{ab^r}\in\Z.
\end{equation*}
Therefore, $b\mid b^{r+1}f(a/b)$. But $b^{r+1}f(a/b)=a^{r+1}+bG(a,b)$, where $G\in\Z[s,t]$ is a homogeneous polynomial. Since $\gcd(a,b)=1$, we must have $b=1$. Hence, $t\in\Z$ and $t$ must divide the constant term of $f(t)$. Substituting $t$ in $f(t)/t$ by all integers that divide the constant term of $f(t)$, we obtain $S_r\cap\Z$: 
\begin{equation*}
\begin{aligned}
S_2\cap\Z = \{ &-3^3\cdot 5^3,-2^2\cdot7^3,-2^4\cdot3^3,-2^6,0,2^7,2^6\cdot3^3,2^4\cdot5^3,2^{11},2^2\cdot3^6,2^7\cdot3^3, 17^3,\\ &2^6\cdot5^3,2^5\cdot7^3,2^5\cdot3^6,2^4\cdot3^3\cdot5^3,2^4\cdot17^3,2^3\cdot31^3,2^3\cdot3^3\cdot11^3,2^2\cdot3^6\cdot7^3,\\ &2^2\cdot5^3\cdot13^3,2\cdot127^3,2\cdot3^3\cdot43^3,3^3\cdot5^3\cdot17^3,257^3\};\\
S_3\cap\Z = \{ &-2^4\cdot11^6\cdot13,-2^{15}\cdot3\cdot5^3,-2^4\cdot3^2\cdot13^3,-2^4\cdot13,0,2^4\cdot3^3,2^8\cdot7,2^4\cdot3^3\cdot5,\\ &2^8\cdot3^3,2^4\cdot3^3\cdot5^3,2^8\cdot3^2\cdot7^3,2^4\cdot3\cdot5\cdot41^3,2^8\cdot7\cdot61^3\};\\
S_5\cap\Z =\{ &-2^6\cdot719^3, -2^6\cdot5\cdot19^3, 2^6, 2^6\cdot5^2, 2^{12}, 2^{12}\cdot5^2, 2^{12}\cdot5\cdot11^3,2^{12}\cdot211^3\};\\
S_7\cap\Z=\{ &-3^3\cdot37\cdot719^3, -3^3\cdot5^3, 3^3\cdot37, 3^2\cdot7^4, 3^3\cdot5^3\cdot17^3, 3^2\cdot7\cdot2647^3\};\\
S_{13}\cap\Z=\{ &-2^6\cdot3^2\cdot4079^3, 2^6\cdot3^2, 2^{12}\cdot3^3\cdot19, 2^{12}\cdot3^3\cdot19\cdot991^3\}.
\end{aligned}
\end{equation*}
Resorting once again to the elliptic curve database of the LMFDB \cite{lmfdb}, we verified that the largest non-surjective prime of each of the non-CM $j$-invariants in \begin{equation*}(S_2\cup S_3\cup S_5\cup S_7\cup S_{13})\cap \Z\end{equation*} is not larger than $37$. This concludes the proof of Theorem \ref{main}.\end{proof}
%\newline
%
%Summarising, we have proved, so far, that if $E$ is an elliptic curve over $\Q$ such that $\End_{\bar{\Q}}=\Z$ and admitting a rational isogeny of degree $r\in\{2,3,5,7,11,13,17,37\}$, then $\bar{\rho}_{E,p}$ surjects onto $\GL_2(\F_p)$ for every $p>37$. The following simple argument shows that an elliptic curve over $\Q$ admitting a $\Q$-rational non-trivial cyclic isogeny must admit a $\Q$-rational isogeny of degree $r\in\{2,3,5,7,11,13,17,37\}$, from where our theorem follows.
%\newline

%If $E$ is an elliptic curve as in the statement of the theorem, and $\varphi$ is a non-trivial $\Q$-rational cyclic isogeny, let $C$ be the kernel of $\varphi$. Let $r$ be a prime divisor of $\deg\varphi$. Since $C$ is cyclic of order $\deg\varphi$, it contains a unique subgroup $D$ of order $r$, which is necessarily $\Q$-rational. Now, $\psi:E\rightarrow E/D$ is a $\Q$-rational isogeny of degree $r$, and Theorem \ref{mazur} says that $r\in\{2,3,5,7,11,13,17,37\}$.

\begin{rem}
During the proof of Theorem \ref{main}, we computed the sets $S_r\cap\Z$, for $r\in\{2,3,5,7,13\}$, using an explicit description of the $j$-invariant map. There is, however, a nice proof of the finiteness of the sets $S_r\cap\Z$, pointed out to me by Samir Siksek. Since this proof is interesting in its own right, we include it here.
\begin{prop}\label{finite}
Let $p$ be a prime number. Then the set $S_p\cap\Z$ is finite.
\end{prop}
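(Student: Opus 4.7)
The plan is to apply Runge's theorem to the $j$-function viewed as a morphism $j \colon X_0(p) \to X(1) = \proj^1_\Q$. Its polar divisor $j^{-1}(\infty)$ is supported on the two cusps $0$ and $\infty$ of $X_0(p)$, both of which are $\Q$-rational (classical, for prime level), so $j^{-1}(\infty)$ decomposes into two distinct Galois orbits over $\Q$. Runge's theorem in the form for rational functions on curves (as in Bombieri--Gubler, \emph{Heights in Diophantine Geometry}, Chapter 5) then guarantees that the set $\{P \in Y_0(p)(\Q) : j(P) \in \Z\}$ is finite, and its image $S_p \cap \Z$ under $j$ is therefore finite.

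A more concrete packaging of the same argument uses the classical modular polynomial $\Phi_p(X, Y) \in \Z[X, Y]$, which is monic of degree $p+1$ in each variable. If $P = (E, C) \in Y_0(p)(\Q)$ has $j(E) \in \Z$, then $\Phi_p(j(E), Z) \in \Z[Z]$ is a monic polynomial in $Z$ with the rational root $j(E/C)$, and hence $j(E/C) \in \Z$ as well. Thus $S_p \cap \Z$ appears as the projection onto the first coordinate of the set of integer points of the plane curve $\Phi_p(X, Y) = 0$. From Kronecker's congruence together with a short asymptotic analysis of the roots $j(p\tau), j((\tau+b)/p)$ of $\Phi_p(X, j(\tau)) = 0$ as $\tau \to i\infty$, the top-degree homogeneous part of $\Phi_p$ is seen to be $-X^p Y^p$, which factors as the product of the two distinct $\Q$-rational irreducibles $X$ and $Y$; the classical plane-curve version of Runge's theorem then yields finiteness of the integer solutions to $\Phi_p = 0$.

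The substantive input is Runge's theorem, which I would take as a black box; the only verification needed is either that the two cusps of $X_0(p)$ are $\Q$-rational (for the abstract formulation) or that the top-degree form of $\Phi_p$ is $-X^p Y^p$ (for the plane-curve formulation). Both are standard, so I do not expect any serious obstacle; the point of the proposition is the conceptual observation that having two rational cusps is exactly the Runge condition for the $j$-function.
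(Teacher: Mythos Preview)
Your proposal is correct, and the $\Phi_p$-formulation in particular is clean: once you observe that $j(E)\in\Z$ forces $j(E/C)\in\Z$ (rational root of a monic integer polynomial), the problem becomes finiteness of integer points on the irreducible plane curve $\Phi_p(X,Y)=0$, whose leading form $-X^pY^p$ factors as $(-1)\cdot X^p\cdot Y^p$ with $\gcd(X,Y)=1$ in $\Q[X,Y]$, so classical Runge applies directly. This handles all primes $p$ uniformly and never requires inverting $p$.

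The paper's argument is different in execution, though it rests on the same underlying fact (two $\Q$-rational cusps). It first disposes of the positive-genus case by finiteness of $X_0(p)(\Q)$, and for $p\in\{2,3,5,7,13\}$ it works on the smooth model $X_0(p)_{\Z[1/p]}\cong\proj^1_{\Z[1/p]}$: a rational point with $j\in\Z$ cannot meet either cusp in any special fibre over $\Z[1/p]$, which forces it to be of the shape $(p^k:1)$; a short $p$-adic continuity argument then shows only finitely many such $k$ can give $j\in\Z$. In effect the paper carries out Runge's method by hand in this genus-$0$ situation, trading the black-box theorem for an elementary two-step argument (integrality away from $p$, then a $p$-adic estimate at $p$). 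Your route is more conceptual and uniform in $p$; the paper's is more self-contained and avoids invoking Runge as an external result. One small caution on your abstract formulation: applying Runge directly to $j$ on $X_0(p)$ requires saying what integral model you use, and over $\Z[1/p]$ the Runge count $|S|<\#\{\text{orbits}\}$ becomes $2<2$; your plane-curve version via $\Phi_p\in\Z[X,Y]$ sidesteps this entirely and is the safer packaging.
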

\begin{proof}
If the genus of $X_0(p)$ is at least $1$, then it is known that there are only finitely many points in $X_0(p)(\Q)$. Therefore, we may assume that the genus of $X_0(p)$ is $0$, i.e., that $p\in\{2,3,5,7,13\}$.

Fix an isomorphism $\psi:\proj^1_{\Z[\frac{1}{p}]}\rightarrow X_0(p)_{\Z[\frac{1}{p}]}$ over $\Z[\frac{1}{p}]$, and choose projective coordinates in such a way that $(0:1)$ is mapped to the cusp $0$ and $(1:0)$ to the cusp $\infty$. The $j$-invariant map is then a morphism 
\begin{equation*}
j:\proj^1_{\Z[\frac{1}{p}]}\rightarrow\proj^1_{\Z[\frac{1}{p}]}
\end{equation*}
mapping $(1:0)$ and $(0:1)$ to $(1:0)$. For a point in $X_0(p)$ to have integral $j$-invariant, it is necessary that its image under the $j$-invariant map does not intersect $(1:0)$ in any special fibre of $\proj^1_{\Z[\frac{1}{p}]}$. Therefore, such a point must not intersect $(0:1)$ nor $(1:0)$ in any special fibre of $\proj^1_{\Z[\frac{1}{p}]}\cong X_0(p)_{\Z[\frac{1}{p}]}$. This means that it must be of the form $(p^k:1)$, for some $k\in\Z$. 

Consider now $X_0(p)(\C_{p})$ and $\proj^1(\C_{p})$ equipped with the $p$-adic topology. Consider the $\Q$-isomorphism between $X_0(p)$ and $\proj^1_{\Q}$ obtained by restricting the isomorphism of the paragraph above to the general fibres. Let $B$ be the open ball of radius $1/p$ centered at the point $(1:0)$ of $\proj^1(\C_{p})$; the integral points of $\proj^1(\C_p)$ lie outside of $B$. Since our isomorphism between $X_0(p)$ and $\proj^1_{\Q}$ and the $j$-invariant morphism $j:X_0(p)(\C_{p})\rightarrow \proj^1(\C_p)$ are $p$-adically continuous, $U:=j^{-1}(B)$ is an open subset of $\proj^1(\C_p)$ containing $(1:0)$ and $(0:1)$. Clearly, among the points $(p^k:1)$, $k\in\Z$, only finitely many lie outside of $U$. This concludes the proof of the lemma.
\end{proof}
\end{rem}

\section{Sketch Proof of Theorem \ref{dm}}\label{sec2}

As the title indicates, the purpose of this section is to provide a sketch proof of Theorem~\ref{dm}. The proof is, \emph{mutatis mutandis}, the same as the proof of {\cite[Theorem 8.1]{dar}}, and the details of what follows can be found in \cite{dar}.
\newline

Let $r$ be an integer in the set $\{1,2,3,5,7,13\}$, and $p$ a prime not in there. Write $d:X_0(rp^2)\rightarrow X_0(rp)$ for the degeneracy map that, at the level of $\bar{\Q}$-points, is given by
\begin{equation*}
(E,C)\mapsto (E,C[rp]).
\end{equation*}
By pull-back, we obtain a homomorphism $d^*:\Pic(X_0(rp))\rightarrow\Pic(X_0(rp^2))$. Let $\pi:X_0(rp^2)\rightarrow X_0(r)\times_{X(1)} X_0^+(p^2)$ be the projection morphism. For ease of notation, we will denote the curve $X_0(r)\times_{X(1)}X_0^+(p^2)$ by $X_0^+(rp^2)$. Consider the homomorphism
\begin{equation*}
\pi_*\circ d^*:\Pic(X_0(rp))\rightarrow \Pic(X_0^+(rp^2)).
\end{equation*}
We define the \emph{$p$-old part of} $\Pic(X_0^+(rp^2))$ to be 
\begin{equation*}
\Pic(X_0^+(rp^2))^{p\text{-}\mathrm{old}}:=\pi_*\circ d^*(\Pic(X_0(rp))),
\end{equation*}
and an element of $\Pic(X_0^+(rp^2))^{p\text{-}\mathrm{old}}$ is called a \emph{$p$-old divisor}.
\newline

The \emph{$p$-new part} is defined as the following quotient:
\begin{equation*}
\Pic(X_0^+(rp^2))^{p\text{-}\mathrm{new}}:=\Pic(X_0^+(rp^2))/\Pic(X_0^+(rp^2))^{p\text{-}\mathrm{old}}.
\end{equation*}
When $r=1$, we shorten $p$-old and $p$-new to old and new, respectively.
\newline

As the homomorphisms $d^*$ and $\pi_*$ preserve degrees of divisors, this whole discussion can be reproduced in terms of the Jacobian varieties of $X_0(p), X_0(p^2)$ and $X_0^+(p^2)$. We then obtain the old and new parts of $\Jac(X_0^+(p^2))$, which are abelian varieties in their own right.
\newline

In \cite{chen}, Chen proved the following result:

\begin{thm}[{\cite[Theorem 1]{chen}}]\label{cheniso}
The Jacobian of $X_{\mathrm{ns}}^+(p)$ is isogenous to the new part of the Jacobian of $X_0^+(p^2)$.
\end{thm}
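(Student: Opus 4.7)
The plan is to compare both Jacobians by realising them, up to isogeny, as isotypic components of a common object. The modular curve $X(p^2)$ carries an action of $G := \GL_2(\Z/p^2\Z)/\{\pm I\}$, and both curves appearing in the statement are quotients of $X(p^2)$: one has $X_0^+(p^2) = X(p^2)/H_1$ with $H_1 = \langle B, w_{p^2}\rangle$ for $B$ a Borel subgroup of $G$, while $X_{\mathrm{ns}}^+(p) = X(p^2)/H_2$, where $H_2$ is the preimage in $G$ of $N_{\mathrm{ns}}\subseteq \GL_2(\F_p)$ under reduction modulo $p$. Via Eichler--Shimura, each Jacobian is determined (up to isogeny) as a Hecke module by the space of $H_i$-invariants in $S_2(\Gamma(p^2))$, so the task becomes to match these two invariant spaces as Hecke modules after discarding the $p$-old part on the $H_1$ side.

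The next step is to decompose $S_2(\Gamma(p^2))$ as a $G\times \T$-module into isotypic components indexed by the normalised newforms $f$ of level dividing $p^2$, grouped by their local representations $\pi_{f,p}$ of $\GL_2(\Q_p)$. The contribution of $f$ to $S_2(X(p^2)/H_i)$ is governed by $\dim \pi_{f,p}^{K_i}$ for the appropriate local open compact subgroup $K_i$, and the new part of $\Jac(X_0^+(p^2))$ singles out exactly those $f$ whose $\pi_{f,p}$ has exact conductor $p^2$. The statement thereby reduces to two purely local claims at $p$: that for $\pi_{f,p}$ of conductor $p^2$, the dimension of the $N_{\mathrm{ns}}$-invariants equals that of the $w_{p^2}$-fixed part of the Borel invariants (both being one); and that for $\pi_{f,p}$ of smaller conductor, no $N_{\mathrm{ns}}$-invariants arise, so that no spurious contribution appears on the non-split side.

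These local claims split along the classification of smooth irreducible representations of $\GL_2(\Q_p)$ of conductor $p^2$: ramified principal series, twists of Steinberg by a character of conductor $p$, and supercuspidal representations of conductor $p^2$. The principal series and Steinberg cases fall out of explicit models by direct computation. The crux is the supercuspidal case: those $\pi_p$ are built by compact induction from characters of the normaliser of the non-split torus $\Q_{p^2}^{\times}\subset \GL_2(\Q_p)$, and precisely this structure is what produces a one-dimensional $N_{\mathrm{ns}}$-invariant line on each such $\pi_p$, to be paired bijectively with the $w_{p^2}$-fixed newvector line inside $\pi_p^{B}$.

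I expect the supercuspidal analysis to be the main obstacle, since it amounts to a finite-level Jacquet--Langlands-type identification linking the non-split torus in $\GL_2(\F_p)$ with the supercuspidal series of $\GL_2(\Q_p)$ of conductor $p^2$; an alternative approach, more in the spirit of de Smit--Edixhoven, would extract the same matching from the character theory of $\GL_2(\Z/p^2\Z)$ directly, sidestepping the automorphic language. Once the local dimension matching is in place, summing the Hecke-equivariant identifications over all newforms and invoking Eichler--Shimura delivers the required isogeny between $\Jac(X_{\mathrm{ns}}^+(p))$ and the new part of $\Jac(X_0^+(p^2))$.
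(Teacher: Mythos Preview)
The paper does not itself prove Theorem~\ref{cheniso}: it is quoted from Chen, whose proof goes via the Selberg trace formula. What the paper does contribute is the description of an explicit correspondence $X_{\mathrm{sp}}^+(p)\vdash X_{\mathrm{ns}}^+(p)$ through the intermediate curve $X'(p)=X(p)/(N_{\mathrm{sp}}\cap N_{\mathrm{ns}})$; this yields a morphism $\phi$ from $\Jac(X_0^+(p^2))\cong\Jac(X_{\mathrm{sp}}^+(p))$ to $\Jac(X_{\mathrm{ns}}^+(p))$, and the paper then invokes Chen's later work to assert that this $\phi$ realises the isogeny. Your representation-theoretic route via Eichler--Shimura and local invariants is thus a third approach, different from both the trace formula and the explicit correspondence. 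It is closer in spirit to the de~Smit--Edixhoven argument you allude to, and in principle it can be made to work; but as written there are two concrete problems.

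First, your group-theoretic setup is off: the Atkin--Lehner involution $w_{p^2}$ does not come from an element of $\GL_2(\Z/p^2\Z)$, so $X_0^+(p^2)$ is \emph{not} of the form $X(p^2)/H_1$ for a subgroup $H_1$ of your $G$. The clean way around this is to stay at level~$p$: use the isomorphism $X_0^+(p^2)\cong X_{\mathrm{sp}}^+(p)$ and compare $N_{\mathrm{sp}}$- and $N_{\mathrm{ns}}$-invariants inside representations of $\GL_2(\F_p)$, which is exactly the de~Smit--Edixhoven identity of permutation modules. Second, your local claim that for $\pi_{f,p}$ of conductor $p^2$ the $N_{\mathrm{ns}}$-invariants and the $w_{p^2}$-fixed newvectors are ``both one-dimensional'' is false: on a newform of exact level $p^2$ the operator $w_{p^2}$ acts by a sign, and that sign is $-1$ roughly half the time, in which case the $w_{p^2}$-fixed part vanishes. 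The correct assertion is that the two dimensions \emph{agree} (each being $0$ or $1$), and the sign dichotomy on the split side has to be matched by a corresponding dichotomy on the non-split side. Without this correction your count overshoots the genus of $X_0^+(p^2)$, and the comparison breaks down.
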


However, his proof, making use of the Selberg trace formula, was not constructive. It was later proven by Chen \cite{chen2} that a construction by Darmon and Merel \cite{dar}, that we now reproduce, is, in fact, an explicit construction of Chen's isogeny.
\newline

In order to describe this construction, we will introduce and recall some notation. Recall that we set $N_{\mathrm{sp}}$ to be the normaliser of a split Cartan subgroup of $\GL_2(\F_p)$, and $N_{\mathrm{ns}}$ the normaliser of a non-split Cartan subgroup. Also, denote by $B^+$ and $B^-$ the subgroups of $\GL_2(\F_p)$ consisting of upper triangular matrices and lower triangular matrices, respectively. Define
\begin{equation*}
X'(p):=X(p)/(N_{\mathrm{sp}}\cap N_{\mathrm{ns}}).
\end{equation*}
For $r\in\{1,2,3,5,7,13\}$, there are two natural projections associated to $X_0(r)\times_{X(1)}X'(p)$:
\begin{equation*}
X_0(r)\times_{X(1)}X'(p)\rightarrow X_{0,\mathrm{sp}}^+(r,p)\quad\text{and}\quad X_0(r)\times_{X(1)}X'(p)\rightarrow X_{0,\mathrm{ns}}^+(r,p).
\end{equation*}
We obtain a correspondence $X_{0,\mathrm{sp}}^+(r,p)\vdash X_{0,\mathrm{ns}}^+(r,p)$, which gives rise to a homomorphism 
\begin{equation*}
\phi:\Jac(X_{\mathrm{0,sp}}^+(r,p))\rightarrow \Jac(X_{0,\mathrm{ns}}^+(r,p)).
\end{equation*}
Since we have an isomorphism between $X_{\mathrm{sp}}^+(p)$ and $X_0^+(p^2)$, we can substitute $\Jac(X_{0,\mathrm{sp}}^+(r,p))$ by the Jacobian of $X_0^+(rp^2)$ in the homomorphism $\phi$. Moreover, 
\begin{lem}[{\cite[Lemma 6.2 (a)]{dar}}]\label{triv}
Under $\phi$, the image of the $p$-old part of $\Jac(X_0^+(rp^2))$ is trivial.
\end{lem}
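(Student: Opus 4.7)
\emph{Strategy.} My plan is to prove the vanishing of $\phi$ on the $p$-old part in two stages: first reduce to the case $r=1$ via the fiber product structure of all curves involved, then perform a moduli-theoretic calculation showing that the correspondence defined by $X'(p)$ annihilates the image of $\pi'_* \circ (d')^*$ on Jacobians.

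\emph{Reduction to $r=1$.} Every curve appearing in the statement is a fiber product over $X(1)$ with the fixed factor $X_0(r)$: namely $X_0(rp) = X_0(r) \times_{X(1)} X_0(p)$, $X_0(rp^2) = X_0(r) \times_{X(1)} X_0(p^2)$, $X_0^+(rp^2) = X_0(r) \times_{X(1)} X_0^+(p^2)$, and likewise for $X_0(r) \times_{X(1)} X'(p)$, $X_{0,\mathrm{sp}}^+(r,p)$, $X_{0,\mathrm{ns}}^+(r,p)$. Moreover, each of the morphisms $d$, $\pi$, and the two legs of the correspondence defining $\phi$ is obtained by base change from its $r=1$ analogue (which I denote $d'$, $\pi'$, $\phi'$) along the structure map $X_0(r) \to X(1)$. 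Flat base change together with the projection formula then allow me to identify $\phi \circ \pi_* \circ d^*$ with $\phi' \circ \pi'_* \circ (d')^*$ ``tensored with the identity'' on the $X_0(r)$-factor. Consequently, the lemma reduces to the statement that $\phi' \circ \pi'_* \circ (d')^*: \Jac(X_0(p)) \to \Jac(X_{\mathrm{ns}}^+(p))$ vanishes.

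\emph{Moduli computation and the main obstacle.} For a non-cuspidal point $Q = (E, C_p) \in X_0(p)$, we have $(d')^*Q = \sum_{C \supset C_p}(E,C)$, summed over the $p$ cyclic subgroups $C \subset E$ of order $p^2$ extending $C_p$. Pushing forward via $\pi'$ and using $X_0^+(p^2) \cong X_{\mathrm{sp}}^+(p)$, this becomes (up to fixed multiplicities) the divisor $\sum_{L \neq C_p} (E, \{C_p, L\})$ on $X_{\mathrm{sp}}^+(p)$, where $L$ ranges over lines in $E[p]$ distinct from $C_p$. Applying $\phi'$ then replaces each sp structure by the sum of the ns structures on $E$ compatible with it through $X'(p) = X(p)/(N_{\mathrm{sp}} \cap N_{\mathrm{ns}})$, with multiplicities dictated by the index of $N_{\mathrm{sp}} \cap N_{\mathrm{ns}}$ inside $N_{\mathrm{sp}}$ and $N_{\mathrm{ns}}$. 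The vanishing claim is then that the resulting divisor on $X_{\mathrm{ns}}^+(p)$ represents $0$ in the Jacobian. I would attempt to prove this by exhibiting the composite correspondence $X_0(p) \vdash X_{\mathrm{ns}}^+(p)$ as factoring through $X(1)$: showing that, once one sums over all $p$ sp-lifts of $C_p$ and over the fibers of $X'(p) \to X_{\mathrm{sp}}^+(p)$, the output divisor depends only on $j(E)$. Any correspondence factoring through $X(1)$ induces the zero map on Jacobians since $\Jac(X(1)) = 0$. The combinatorial accounting of cosets of $N_{\mathrm{sp}} \cap N_{\mathrm{ns}}$ needed to verify this factorization --- in particular, matching the $\GL_2(\F_p)$-orbits of sp structures containing a fixed line with the orbits of ns structures on $E[p]$ --- is the step I expect to be the main obstacle.
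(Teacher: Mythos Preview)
Your reduction to $r=1$ has a genuine gap. You claim that flat base change and the projection formula identify $\phi\circ\pi_*\circ d^*$ with ``$\phi'\circ\pi'_*\circ(d')^*$ tensored with the identity on the $X_0(r)$-factor'', and conclude that vanishing on $\Jac(X_0(p))$ implies vanishing on $\Jac(X_0(rp))$. This inference is false in general: a correspondence that induces the zero map on Jacobians need not remain zero after fiber product. For instance, the diagonal correspondence $\Delta\subset\proj^1\times\proj^1$ trivially kills $\Jac(\proj^1)=0$, but its base change along any cover $C\to\proj^1$ is the diagonal in $C\times C$, which induces the identity on $\Jac(C)$. What \emph{would} base-change correctly is the stronger statement that the $r=1$ correspondence factors through $X(1)$; that does pull back to factoring through $X_0(r)$, which has genus~$0$, and hence gives zero on Jacobians. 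You do aim for this factorization in your final paragraph, but then the ``reduction'' step is both misstated and superfluous --- and you leave the factorization itself as the acknowledged main obstacle.

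The paper's proof bypasses all of this with a one-line group-theoretic argument that is exactly the clean form of your ``combinatorial accounting of cosets''. An old divisor on $X_0^+(rp^2)\cong X_{0,\mathrm{sp}}^+(r,p)$ lifts to a divisor on $X_0(r)\times_{X(1)}X(p)$ that is both $B^+$- and $B^-$-invariant. One then observes the coset identity
\[
N_{\mathrm{ns}}B^+ \;=\; N_{\mathrm{ns}}B^- \;=\; \GL_2(\F_p),
\]
so the push-forward to $X_{0,\mathrm{ns}}^+(r,p)$ is $\GL_2(\F_p)$-invariant, i.e.\ pulled back from $X_0(r)$. Since $X_0(r)$ has genus~$0$, this is zero in the Jacobian. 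No moduli computation with cyclic subgroups or ns structures is needed; the single identity $N_{\mathrm{ns}}B^{\pm}=\GL_2(\F_p)$ (equivalently $|N_{\mathrm{ns}}\cap B^{\pm}|=2(p-1)$) replaces your entire last paragraph.
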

\begin{proof}
Define the $p$-old part of $\Pic(X_{0,\mathrm{ns}}^+(r,p))$ as the image of \begin{equation*}p^*:\Pic(X_0(r))\rightarrow\Pic(X_{0,\mathrm{ns}}^+(r,p)),\end{equation*} the pull-back of the projection to $X_0(r)$. Since $X_0(r)$ has genus $0$, the $p$-old part of $\Jac(X_{0,\mathrm{ns}}^+(r,p))$ is trivial. Therefore, in order to prove the lemma, we only need to show that the image under $\phi$ of an old divisor is an old divisor.
\newline

Under our isomorphism between $X_{0,\mathrm{sp}}^+(r,p)$ and $X_0^+(rp^2)$, an old divisor in $X_{0,\mathrm{sp}}^+(r,p)$ has an inverse image in $X_0(r)\times_{X(1)}X(p)$ which is the image of a $B^+$ and a $B^-$-invariant divisor. Since \begin{equation*}N_{\mathrm{ns}}B^+=N_{\mathrm{ns}}B^-=\GL_2(\F_p),\end{equation*} the image in $X_{0,\mathrm{ns}}^+(r,p)$ of such a divisor is $\GL_2(\F_p)$-invariant, meaning that it must be an old divisor of $X_{0,\mathrm{ns}}^+(r,p)$.
\end{proof}
We then obtain a homomorphism
\begin{equation*}
\Jac(X_0^+(rp^2))^{p\text{-}\mathrm{new}}\rightarrow \Jac(X_{0,\mathrm{ns}}^+(r,p)),
\end{equation*}
which, by Chen \cite{chen2}, is precisely Chen's isogeny. Moreover, it follows from this explicit description that if $n\geq 1$ is an integer coprime to $p$, then Chen's isogeny commutes with the Hecke operators $T_n$. Summing up,

\begin{thm}[{\cite[Theorem 6.1]{dar}}]
If $r\in\{1,2,3,5,7,13\}$, there is an isogeny between $J_{0,\mathrm{ns}}^+(r,p)$ and $\Jac(X_0^+(rp^2))^{p\text{-}\mathrm{new}}$ which, for any integer $n\geq 1$ coprime to $p$, commutes with the action of $T_n$.
\end{thm}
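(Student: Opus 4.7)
The plan is to assemble the construction already laid out in this section into a single morphism and then verify Hecke compatibility. Starting from the correspondence $X_{0,\mathrm{sp}}^+(r,p) \vdash X_{0,\mathrm{ns}}^+(r,p)$ coming from the two projections out of $X_0(r) \times_{X(1)} X'(p)$, one obtains the homomorphism $\phi\colon \Jac(X_{0,\mathrm{sp}}^+(r,p)) \to \Jac(X_{0,\mathrm{ns}}^+(r,p))$ constructed above. Using the canonical isomorphism $X_{\mathrm{sp}}^+(p) \cong X_0^+(p^2)$ to identify $X_{0,\mathrm{sp}}^+(r,p)$ with $X_0^+(rp^2)$, and applying Lemma \ref{triv} to kill the $p$-old part, $\phi$ factors through a well-defined homomorphism $\Phi\colon \Jac(X_0^+(rp^2))^{p\text{-}\mathrm{new}} \to J_{0,\mathrm{ns}}^+(r,p)$.

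Next I would argue that $\Phi$ is actually an isogeny. For $r = 1$ this is exactly Theorem \ref{cheniso}, provided one knows that the correspondence construction above reproduces Chen's isogeny; this identification is the content of \cite{chen2}. For general $r \in \{1,2,3,5,7,13\}$ I would compare dimensions on both sides and exploit the fact that fibring over $X_0(r)$ (a curve of genus $0$) commutes, up to isogeny, with the formation of the Jacobian and with splitting off the $p$-new part. Concretely, the fibre product structure should exhibit $\Jac(X_0^+(rp^2))^{p\text{-}\mathrm{new}}$ and $J_{0,\mathrm{ns}}^+(r,p)$ as isogeny factors built from copies of $\Jac(X_0^+(p^2))^{\mathrm{new}}$ and $\Jac(X_{\mathrm{ns}}^+(p))$ indexed by the degeneracy maps of $X_0(r) \to X(1)$, so applying the $r=1$ case factor-by-factor yields the general statement.

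For the Hecke compatibility, I would exploit the purely level-theoretic nature of the construction. The operators $T_n$ with $(n,p)=1$ are defined by correspondences altering only the level structure at primes away from $p$; in particular they commute with the degeneracy maps to $X(1)$, with the passage to the quotients by subgroups of $\GL_2(\F_p)$, and with the auxiliary $\Gamma_0(r)$-structure. Since the correspondence $X_0(r) \times_{X(1)} X'(p)$ is built entirely out of such data, $\phi$ commutes with each $T_n$ coprime to $p$, and this equivariance descends to $\Phi$ because the $p$-old part is stable under $T_n$ for such $n$.

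The main obstacle is the first one: identifying the correspondence-theoretic $\Phi$ with the (originally non-constructive) Selberg-trace-formula isogeny of Theorem \ref{cheniso}, so that the isogeny conclusion is transported from the known existence statement onto the explicit map we have just written down. This is the substantive input \cite{chen2} supplies; without it one would know only that \emph{some} isogeny exists, not that this particular construction yields one. Once that identification is granted, the isogeny property for general $r$ and the Hecke equivariance are formal consequences of the description at the level of correspondences.
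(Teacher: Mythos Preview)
Your proposal follows the paper's approach closely: the paper likewise builds $\phi$ from the correspondence through $X_0(r)\times_{X(1)}X'(p)$, uses Lemma~\ref{triv} to factor through the $p$-new quotient, invokes \cite{chen2} to identify the resulting map as an isogeny, and reads Hecke compatibility directly off the explicit correspondence description.

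One point of divergence worth noting: your reduction of the general $r$ case to $r=1$ via a ``factor-by-factor'' decomposition is not quite right as stated. The Jacobians $J_{0,\mathrm{ns}}^+(r,p)$ and $\Jac(X_0^+(rp^2))^{p\text{-}\mathrm{new}}$ are \emph{not} isogenous to sums of copies of $\Jac(X_{\mathrm{ns}}^+(p))$ and $\Jac(X_0^+(p^2))^{\mathrm{new}}$ indexed by degeneracy maps; they contain genuinely new pieces at the mixed level. The paper (following \cite{dar}, \cite{chen2}, and the representation-theoretic reformulation of de Smit--Edixhoven \cite{smi}) does not reduce to $r=1$ at all: the relevant identity is among permutation representations of $\GL_2(\F_p)$ on cosets of $B^{\pm}$, $N_{\mathrm{sp}}$, $N_{\mathrm{ns}}$, and this identity tensors unchanged with the $\Gamma_0(r)$-level structure, yielding the isogeny for all $r$ simultaneously. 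So the substantive external input is the same as you identify, but it applies directly to arbitrary $r$ rather than only to $r=1$.
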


The existence of such an isogeny is of fundamental importance to prove
\begin{thm}[{\cite[Proposition 7.1]{dar}}]\label{win}
Let $r\in\{2,3,5,7,13\}$ and let $p$ be a prime number not in there. There exists a non-trivial optimal quotient $A$ of $J_{0,\mathrm{ns}}^+(r,p)$, defined over $\Q$, such that $A(\Q)$ is finite. Moreover, if $n\geq 1$ is an integer coprime to $p$, the kernel of the canonical projection $J_{0,\mathrm{ns}}^+(r,p)\rightarrow A$ is stable under the Hecke operators $T_n$.
\end{thm}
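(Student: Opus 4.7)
The plan is to take $A$ to be the winding quotient of $J_{0,\mathrm{ns}}^+(r,p)$ with respect to the Hecke algebra $\T$ generated by the operators $T_n$ with $\gcd(n, p) = 1$. Recall that, for a modular Jacobian $J$ equipped with a winding element $e$ in its rational relative homology (the class of the path from $0$ to $i\infty$, suitably adapted to our curve), the winding quotient is defined as $J^{e} := J/(\Ann_\T(e) \cdot J)$. The kernel of the canonical map $J \to J^{e}$ is $\T$-stable by construction, so the Hecke-stability assertion in the proposition is automatic from this choice of $A$.

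To show that $A(\Q)$ is finite, one appeals to Kolyvagin--Logachev (for the modular abelian varieties attached to weight-$2$ newforms), transported to our setting via Chen's isogeny: up to isogeny, the winding quotient decomposes as a product of the simple factors $A_f$ attached to precisely those newforms $f$ with $L(f, 1) \neq 0$, and each such $A_f$ has finite Mordell--Weil group over $\Q$. Hence $A(\Q)$ is finite.

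The main step, and the genuine obstacle, is to prove that $A$ is non-trivial. Exploiting the Hecke-equivariant isogeny $\Jac(X_0^+(rp^2))^{p\text{-}\mathrm{new}} \sim J_{0,\mathrm{ns}}^+(r,p)$ established above, this reduces to exhibiting a weight-$2$ newform $f$ on $\Gamma_0(rp^2)$ that is $p$-new, has Atkin--Lehner eigenvalue $+1$ at $p^2$ (so as to contribute to $\Jac(X_0^+(rp^2))$), and satisfies $L(f, 1) \neq 0$. For such an $f$ that is moreover new at $r$, the sign of the functional equation of $L(f, s)$ is $-w_{p^2}(f)\, w_r(f) = -w_r(f)$, so one is forced into the $w_r = -1$ Atkin--Lehner eigenspace. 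Producing at least one eigenform in this eigenspace with non-vanishing central value is the analytic heart of the matter. This is carried out in Darmon--Merel \cite{dar} for $r = 2, 3$ via a direct Eisenstein/twisting computation, and the same argument applies uniformly for $r \in \{5, 7, 13\}$, since the only structural hypothesis used is that $X_0(r)$ has genus $0$ --- precisely the constraint on $r$ that we are working with.
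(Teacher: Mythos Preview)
Your proposal is essentially the same approach as the paper's sketch: construct a winding quotient, use Chen's isogeny to pass between $J_{0,\mathrm{ns}}^+(r,p)$ and $\Jac(X_0^+(rp^2))^{p\text{-}\mathrm{new}}$, invoke Kolyvagin--Logachev for finiteness of $A(\Q)$, and refer to Darmon--Merel for non-triviality. One presentational point worth tightening: you define the winding element directly on $J_{0,\mathrm{ns}}^+(r,p)$ via a path ``suitably adapted to our curve,'' but the cusps of $X_{0,\mathrm{ns}}^+(r,p)$ are only defined over $\Q(\zeta_p)^+$, so there is no canonical rational $\{0,i\infty\}$ path there; the paper avoids this by first passing via Chen's isogeny to $\Jac(X_0^+(rp^2))^{\mathrm{new}}$ and constructing the winding element $e$ on $X_0^+(rp^2)$, where the geodesic from $0$ to $\infty$ genuinely makes sense, and only then taking $A = \Jac(X_0^+(rp^2))^{\mathrm{new}}/I_e\Jac(X_0^+(rp^2))^{\mathrm{new}}$.
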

\begin{proof}[Sketch proof]
As we have seen, there is an isogeny between $J_{0,\mathrm{ns}}^+(r,p)$ and $\Jac(X_0^+(rp^2))^{p\text{-new}}$ which commutes with the Hecke operators $T_n$ whenever $n$ is coprime to $p$. Thus, we are reduced to prove the result for $\Jac(X_0^+(rp^2))^{p\text{-new}}$. Note that $\Jac(X_0^+(rp^2))^{\mathrm{new}}$ is an optimal quotient of $\Jac(X_0^+(rp^2))^{p\text{-new}}$. Given an optimal quotient $B$ of $J_0^{\mathrm{new}}(N)$, and writing $p:J_0^{\mathrm{new}}(N)\rightarrow B$ for the canonical projection, we know, by a result of Ribet (see, for example, section $2$ of \cite{maz1}), that $\ker p$ is stable under the action of Hecke operators. Therefore, the same will be true for any optimal quotient of $\Jac(X_0^+(rp^2))^{\mathrm{new}}$. Hence, we only need to show that $\Jac(X_0^+(rp^2))^{\mathrm{new}}$ has a non-trivial optimal quotient with only finitely many rational points.  
\newline

Consider the geodesic in $\uC^*$ that connects $0$ to $\infty$ and the path $e$ it defines in $X_0^+(rp^2)$. According to the Drinfeld--Manin theorem, $e\in H_1(X_0^+(rp^2),\Q)$. Moreover, it is fixed under complex conjugation. Hence, $e\in H_1(X_0^+(rp^2),\Q)^+$. Write $e'$ for the image of $e$ in $H_1(\Jac(X_0^+(rp^2))^{\mathrm{new}}(\C),\Q)^+$, and $I_e$ for $\Ann_{\T_{\Z}}(e')$. Set now
\begin{equation*}
A:=\Jac(X_0^+(rp^2))^{\mathrm{new}}/I_e\Jac(X_0^+(rp^2))^{\mathrm{new}}.
\end{equation*}
It remains to prove that $A$ is non-trivial and that $A(\Q)$ is a finite set. For the proof of these two claims, we refer to the paper of Darmon and Merel \cite{dar}.
\end{proof}
We are now in position to apply a variant of the argument by Mazur that was described in the introduction. For details, we refer, once again, to \cite{dar}. In a very informal way, the proof goes as follows. It can be proven that, for any prime $\ell\equiv\pm 1\pmod{p}$, the composition of the maps $X_{0,\mathrm{ns}}^+(r,p)_{\Z_{\ell}}\rightarrow J_{0,\mathrm{ns}}^+(r,p)_{\Z_{\ell}}$ and $J_{0,\mathrm{ns}}^+(r,p)_{\Z_{\ell}}\rightarrow A_{\Z_{\ell}}$ is a formal immersion at $\infty_{\ell}$, where $\infty\in X_{0,\mathrm{ns}}^+(r,p)$ is one of the cusps.  If $E$ is an elliptic curve as in the statement of the theorem, then it gives rise to a $\Q$-rational point $P$ in $X_{0,\mathrm{ns}}^+(r,p)$. If it were the case that $j(E)\notin\Z[\frac{1}{p}]$, then there would exist a prime $\ell\neq p$ dividing the denominator of $j(E)$. Therefore, the $\Z_{\ell}$-section corresponding to the point $P$ would intersect a cusp (which we may assume to be $\infty$) on the special fibre over $\ell$. Since all the cusps of $X_{0,\mathrm{ns}}^+(r,p)$ are only defined over $\Q(\zeta_p)^+$, we conclude that $\ell\equiv\pm 1\pmod{p}$. It can be proven that the image of $P$ in $A$ is torsion (see {\cite[Lemma~8.3.]{dar}}). In a manner similar to the one presented in the introduction, it is now easy to derive a contradiction from the conjunction of this and the fact that the composition of the maps $X_{0,\mathrm{ns}}^+(r,p)_{\Z_{\ell}}\rightarrow J_{0,\mathrm{ns}}^+(r,p)_{\Z_{\ell}}$ and $J_{0,\mathrm{ns}}^+(r,p)_{\Z_{\ell}}\rightarrow A_{\Z_{\ell}}$ is a formal immersion at $\infty_{\ell}$.

%\section{Appendix}
%This appendix lists the explicit equations of the $j$-invariant maps referred to in the course of the proof of Theorem \ref{main} in Section \ref{msec}.

%\begin{center}
%\begin{tabular}{|c|c|}
%\hline
%$r$ & $f(t)$\\
%\hline \hline
%$2$ & $(t+16)^3$\\
%\hline
%$3$ & $(t+27)(t+3)^3$\\
%\hline
%$5$ & $(t^2+10t+5)^3$\\
%\hline
%$7$ & $(t^2+5t+1)^3(t^2+13t+49)$\\
%\hline
%$13$ & $(t^4+7t^3+20t^2+19t+1)^3(t^2+5t+13)$\\
%\hline
%\end{tabular}
%\end{center}
\bibliography{Serre_Cyclic}{}
\bibliographystyle{amsplain}

\end{document}